\theoremstyle{lemma}
\newtheorem{theorem}{Theorem}[section]
\newtheorem{lemma}[theorem]{Lemma}
\theoremstyle{definition}
\newtheorem{definition}[theorem]{Definition}
\newtheorem{proposition}[theorem]{Proposition}
\theoremstyle{corollary}
\newtheorem{corollary}[theorem]{Corollary}
\theoremstyle{remark}
\newtheorem{remark}[theorem]{Remark}
\theoremstyle{notation}
\theoremstyle{claim}
\numberwithin{equation}{section}
\newenvironment{customthm}[1]
  {\innercustomthm}
  {\endinnercustomthm}
\newenvironment{customcla}[1]
  {\innercustomcla}
  {\endinnercustomcla}
\begin{document}

\title[A generalized type semigroup and dynamical comparison ]{A generalized type semigroup and dynamical comparison}

\author{Xin Ma}
\email{xma29@buffalo.edu}
\address{Department of Mathematics,
         Texas A\&M University,
         Collage Station, TX 77843}
\curraddr{Department of Mathematics,
State University of New York at Buffalo,
Buffalo, NY, 14260}

\subjclass[2010]{37B05, 46L35}

\date{Feb 9, 2020.}


\begin{abstract}
In this paper, we construct and study a semigroup associated to an action of a countable discrete group on a compact Hausdorff space, that can be regarded as a higher dimensional generalization of the type semigroup. We study when this semigroup is almost unperforated. This leads to a new characterization of dynamical comparison and thus answers a question of Kerr and Schafhauser. In addition, this paper suggests a definition of comparison for dynamical systems in which neither necessarily the acting group is amenable nor the action is minimal.
\end{abstract}

\maketitle

\section{Introduction}
Reduced crossed products of the form $C(X)\rtimes_r G$ arising from the topological dynamical systems, say from $(X, G,
\alpha)$ for a countable discrete group $G$, an infinite compact Hausdorff space $X$ and a continuous action
$\alpha$, have long been an important source of examples and motivation for the study of $C^\ast$-algebras. Recently, comparison phenomena in dynamics have been observed to be essential for establishing certain structure theorems for reduced crossed product $C^*$-algebras. For example, for a minimal free action of an amenable group, it has been demonstrated in \cite{D}, \cite{D-G} and \cite{M1} that dynamical comparison implies the $\mathcal{Z}$-stability of the reduced crossed product under some assumptions by using the notion called almost finiteness defined by Kerr in \cite{D}. When the group is not amenable it is possible for action not to have an invariant probability measure, in which case dynamical comparison is verified in \cite{M2} to imply pure infiniteness of the reduced crossed products of minimal topologically free actions.

A well-known dynamical analogue of strict comparison in $C^\ast$-setting, the idea of dynamical comparison on a general compact Hausdorff space dates back to Winter in 2012 and was discussed in \cite{B} and \cite{D}. We record here the version that appeared in \cite{D}. Here $M_G(X)$ denotes the convex set of $G$-invariant regular Borel probability measures on $X$.

\begin{definition}(\cite[Definition 3.1]{D})
Let $F$ be a closed set in $X$ and $O$ a non-empty open subset of $X$. We write $F\prec O$ if there exists a finite collection $\mathcal{U}$ of open subsets of $X$ which cover $F$, an $s_U\in \Gamma$ for each $U\in \mathcal{U}$ such that the images $s_UU$ for $U\in \mathcal{U}$ are pairwise disjoint subsets of $O$. In addition, for open sets $U,V$, we write $U\prec V$ if $F\prec V$ holds whenever $F$ is a closed subset of $U$.
\end{definition}

\begin{definition}(\cite[Definition 3.2]{D})
The action $\alpha: G\curvearrowright X$ is said to have \emph{dynamical comparison} (\textit{comparison} for short) if $V\prec O$ for all open sets $V, O\subset X$ satisfying $\mu(V)<\mu(O)$ for every $\mu\in M_G(X)$.
\end{definition}

A natural question is to determine when an action has comparison.  Before the formal definition of comparison, it was well-known that all minimal
$\mathbb{Z}$-actions on the Cantor set have this property as a consequence of the Kakutani-Rokhlin clopen tower
partition (see \cite{G-W}). More recently, Downarowicz and Zhang \cite{Dow-Z} showed that all continuous actions on the Cantor set of groups for which
every finitely generated subgroup has subexponential growth have comparison. On the other hand,
it is still open whether all continuous actions on the Cantor set of amenable countably infinite groups have
comparison. However, by combining Theorem A in \cite{D-G} and Theorem 4.2 in \cite{C-J-K-M-S-T}, the property of comparison is generic for minimal free actions of a fixed amenable countably
infinite group on the Cantor set. In the setting of non-amenable groups, when there is no invariant measure for the
action, the strong boundary actions introduced in \cite{L-S} and $n$-filling actions introduced in \cite{J-R} are
natural examples of dynamical comparison. For more details see \cite{M2}.

However, observe that Definition 1.2 behaves well only when $G$ is amenable or $\alpha$ is minimal. One motivation of this paper is to find a generalized version of dynamical comparison regardless of the amenability of groups or the minimality of actions. We write the definition below. Theorem B and Theorem C in this paper will validate this generalization.

Recall that a \emph{premeasure} $\mu$ on an algebra $\mathcal{A}$ of sets is a function  $\mu: \mathcal{A}\rightarrow [0,\infty]$ satisfying the following (see \cite[~p. 30]{Folland})

\begin{enumerate}[label=(\roman*)]
\item $\mu(\emptyset)=0$;

\item $\mu(\bigsqcup_{n=1}^\infty A_n)=\sum_{n=1}^\infty \mu(A_n)$ for any disjoint sequence $\{A_n\in \mathcal{A}: n\in \mathbb{N}^+\}$ such that $\bigsqcup_{n=1}^\infty A_n\in \mathcal{A}$.
\end{enumerate}

Note that a classical theorem of Carath\'{e}odory states that each premeasure $\mu$ on an algebra $\mathcal{A}$ extends to a measure on the $\sigma$-algebra generated by $\mathcal{A}$ (see \cite[Theorem 1.14]{Folland}). In addition, if $\mu$ is $\sigma$-finite then the extension is unique.

Throughout the paper $\mathcal{A}_0$ denotes the algebra generated by open sets in $X$. We denote by $\operatorname{Pr}_G(X)$ the set of all $G$-invariant premeasures on $\mathcal{A}_0$ which are regular, i.e., having inner regularity $\mu(B)=\sup\{\mu(F): F\subset B, F\
\textrm{compact}\}$ and outer regularity $\mu(B)=\inf\{\mu(O): B\subset O, O\ \textrm{open}\}$ for all $B\in \mathcal{A}_0$. We say a premeasure $\mu\in \operatorname{Pr}_G(X)$ a \emph{probability premeasure} if $\mu(X)=1$. We remark that the extension of a premeasure $\mu\in \operatorname{Pr}_G(X)$ in the sense of Carath\'{e}odory is still $G$-invariant. If $\mu\in \operatorname{Pr}_G(X)$ is $\sigma$-finite then the unique extension is regular as well.

\begin{definition}
The action $\alpha: G\curvearrowright X$ is said to have \emph{\emph{(}generalized\emph{)} dynamical comparison} if  whenever $A, B$ are open sets in $X$ satisfying
\begin{enumerate}[label=(\roman*)]
\item $A\subset G\cdot B$;

\item $\mu(A)<1$ for every $\mu\in \operatorname{Pr}_G(X)$ with $\mu(B)=1$.
\end{enumerate}
Then $A\prec B$ holds

\end{definition}

We remark that Theorem B and Theorem C below show that if the action $\alpha$ is minimal, our generalized version of dynamical comparison (Definition 1.3) and the original one (Definition 1.2) coincide. 

In the $C^\ast$-setting, it has been proved by R{\o}rdam in \cite{R} that a simple unital $C^\ast$-algebra $A$ has strict comparison if and only if its Cuntz semigroup $\operatorname{Cu}(A)$ is almost unperforated, i.e.,
$(n+1)\cdot\langle a\rangle\leq n\cdot\langle b\rangle $ for some $n\in \mathbb{N}$ implies $\langle a\rangle\leq \langle b\rangle$. Therefore, as a dynamical analogue of strict comparison, dynamical comparison is expected to
have a characterization of the same type, without using invariant probability measures. The author was communicated by David Kerr this question, which was raised by David Kerr and Christopher Schafhauser. In this paper, we address this question and obtain the following main result (Theorem C) as a new characterization of dynamical comparison, which has the flavor of almost unperforation by Remark 1.5 below. To accomplish this goal, we consider the following order motivated by the type semigroup of zero-dimensional spaces (for example, see \cite{R-S}). Using this order we will construct a partially ordered semigroup $W(X, G)$ in the next section, called the \textit{generalized type semigroup}, which is the main tool in this paper.

\begin{definition}
Suppose that $\alpha: G\curvearrowright X$ is a continuous action of a countably infinite discrete group $G$ on a compact Hausdorff space $X$.
Let $O_1,\dots, O_n$ and $V_1,\dots, V_m$ be two sequences of open sets in $X$, We write \[\bigsqcup_{i=1}^n O_i\times \{i\}\prec \bigsqcup_{l=1}^m V_l\times \{l\}\]
if for every $i\in\{1,\dots, n\}$ and every closed set $F_i\subset O_i$ there are a collection of open sets, $\mathcal{U}_i=\{U^{(i)}_1,\dots, U^{(i)}_{J_i}\}$ forming a cover of $F_i$, $s^{(i)}_1,\dots, s^{(i)}_{J_i}\in G$ and  $k_1^{(i)},\dots, k_{J_i}^{(i)}\in \{1,\dots, m\}$ such that
\[\bigsqcup_{i=1}^n\bigsqcup_{j=1}^{J_i}s^{(i)}_jU^{(i)}_j\times \{k^{(i)}_j\}\subset \bigsqcup_{l=1}^m V_l\times \{l\}.\]
In particular, we write $nO\prec mV$ for simplification if one has
\[\bigsqcup_{i=1}^{n} O\times \{i\}\prec \bigsqcup_{l=1}^m V\times \{l\}.\]
\end{definition}

The \emph{chromatic number} of a family $\mathcal{C}$ of subsets of a given set is defined to be the least $d\in \mathbb{N}$ such that there is a partition of $\mathcal{C}$ into $d$ subcollections each of which is disjoint.

\begin{remark}
We remark that the relation $(n+1)O\prec nV$ can be described within $X$. Indeed, $(n+1)O\prec nV$ holds if and only if for every closed subset $F\subset O$ there are a family of open sets $\{U^{(i)}_j: j=1,\dots, J_i, i=1,\dots, n+1\}$, and a family of group elements $\{s^{(i)}_j\in G, j=1,\dots, J_i, i=1,\dots, n+1\}$ satisfying:
\begin{enumerate}[label=(\roman*)]
\item $F\subset \bigcup_{j=1}^{J_i} U^{(i)}_j$ for $i=1,2,\dots, n+1$,

\item $s^{(i)}_jU^{(i)}_j\subset V$ for all $j=1,\dots, J_i, i=1,\dots, n+1$, and

\item $\{s^{(i)}_jU^{(i)}_j: j=1,\dots, J_i, i=1,\dots, n+1\}$ has chromatic number at most $n$.
\end{enumerate}

\end{remark}

The following theorem can be regarded as a dynamical analogue of the equivalence between the strict comparison and almost unperforation of the Cuntz semigroup proved in \cite{R}.

\begin{customthm}{A}\emph{(Corollary 3.10)}
Suppose that $\alpha: G\curvearrowright X$ is a continuous action of a countably infinite discrete group $G$ on a compact metrizable space $X$. Then the following are equivalent.
\begin{enumerate}[label=(\roman*)]
	\item The generalized type semigroup $W(X, G)$ is almost unperforated.
	
	\item For any two sequences $(A_1,\dots, A_k)$ and $(B_1,\dots, B_l)$ of open sets in $X$, if $\bigcup_{i=1}^kA_i\subset G\cdot (\bigcup_{j=1}^l B_j)$ and $\sum_{i=1}^k\mu(A_i)<\sum_{j=1}^l\mu(B_j)$ for every $\mu\in \operatorname{Pr}_G(X)$ with  $\sum_{j=1}^l\mu(B_j)=1$, then 
	\[\bigsqcup_{i=1}^k A_i\times \{i\}\prec \bigsqcup_{j=1}^l B_j\times \{j\}\] holds.
\end{enumerate}
\end{customthm}

In addition, we obtain the following result as a characterization of our (generalized) dynamical comparison with the flavor of almost unperforation.

\begin{customthm}{B}\emph{(Corollary 3.11)}
Suppose that $\alpha: G\curvearrowright X$ is a continuous action of a countably infinite discrete group $G$ on a compact metrizable space $X$. Then the following are equivalent. 
\begin{enumerate}[label=(\roman*)]
	\item For any non-empty open sets $A, B$, if there is an $n\in \mathbb{N}^+$ such that  $(n+1)A\prec nB$ then $A\prec B$.
	
	\item The action $\alpha: G\curvearrowright X$ has (generalized) dynamical comparison in the sense of Definition 1.3.
\end{enumerate}
\end{customthm}

In light of the theorem above, when $G$ is amenable or $\alpha$ is minimal, we obtain a new characterization of dynamical comparison (Definition 1.2) as our main result in this paper addressing the question of Kerr and Schafhauser.

\begin{customthm}{C}\emph{(Corollary 3.12)}
Suppose that $\alpha: G\curvearrowright X$ is a continuous action of a countably infinite discrete group $G$ on a compact metrizable space $X$. Suppose in addition that $G$ is amenable or $\alpha$ is minimal.   The following are equivalent.
\begin{enumerate}[label=(\roman*)]
 \item Whenever $A, B$ are open sets in $X$ such that $\mu(B)>0$ for all $\mu\in M_G(X)$, if there is an $n\in \mathbb{N}^+$ such that $(n+1)A\prec nB$, then $A\prec B$.

 \item $\alpha: G\curvearrowright X$ has dynamical comparison in the sense of Definition 1.2.
\end{enumerate}
\end{customthm}

We remark that in the case that $\alpha$ is minimal, we can drop the assumption that ``$\mu(B)>0$ for all $\mu\in M_G(X)$'' in (i) in the corollary above since it holds automatically for any non-empty open set $B$.

\section{A generalized type semigroup $W(X, G)$}
The study of the type semigroup dates back to Tarski, who used this algebraic tool to study paradoxical
decompositions. In the context of topological dynamics, so far many authors have studied this topic, for example, \cite{B-L}, \cite{D}, \cite{M2},
\cite{TimR}, \cite{R-S} and \cite{Wagon}. However, the type semigroup behaves well only on zero-dimensional spaces. In this section, motivated by the classical type semigroup, we construct a generalized type
semigroup on a compact Hausdorff space (not necessarily metrizable) without any restriction on the dimension of the
space. Unlike the original one, the order on the new semigroup is not algebraic any more. Throughout the paper $G$ denotes a countably infinite discrete group, $X$ denotes an infinite compact
Hausdorff space and $\alpha: G\curvearrowright X$ denotes a continuous action of $G$ on $X$. Throughout the paper the notation  $\operatorname{supp}(f)$ for a function $f\in C(X)_+$ denotes the open support of $f$, i.e., $\operatorname{supp}(f)=\{x\in X: f(x)\neq 0\}$.

\begin{definition}
Let $\alpha: G\curvearrowright X$ be a continuous action of a countably infinite discrete group $G$ on a compact Hausdorff space $X$. Let $a=(f_1,\dots, f_n)\in C(X)_+^{\oplus n}$ and $b=(g_1,\dots, g_m)\in C(X)_+^{\oplus m}$. We write $a\preccurlyeq b$ if \[\bigsqcup_{i=1}^n \operatorname{supp}(f_i)\times \{i\}\prec \bigsqcup_{l=1}^m \operatorname{supp}(g_l)\times \{l\}\] holds in the sense of Definition 1.4.
\end{definition}

We write $K(X, G)=\bigcup_{n=1}^\infty C(X)_+^{\oplus n}$ and observe that the relation $\preccurlyeq$ described above is in fact defined on $K(X, G)$. We remark that Definition 1.4 allows us to describe the subequivalence relation $\preccurlyeq$ by simply using open sets like the classical type semigroup in the context of zero-dimensional spaces. However, we insist on considering functions because the relation $\preccurlyeq$ between two sequences of functions $a, b\in K(X, G)$ is naturally related to the Cuntz subequivalence $\precsim$ for $a$ and $b$ in the $C^*$-algebra $C(X)\rtimes_r G$ (see Proposition 2.3 below). To investigate properties of the relation $\preccurlyeq$, we first show that this relation is transitive.

\begin{lemma}
Let $a,b,c\in K(X, G)$ be such that $a\preccurlyeq b$ and $b\preccurlyeq c$. Then $a\preccurlyeq c$.
\end{lemma}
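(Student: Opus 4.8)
The statement is that if $a \preccurlyeq b$ and $b \preccurlyeq c$, then $a \preccurlyeq c$. Here $a = (f_1, \dots, f_n)$, $b = (g_1, \dots, g_m)$, $c = (h_1, \dots, h_p)$ are tuples of positive continuous functions. Writing things in terms of open sets: $O_i = \operatorname{supp}(f_i)$, $V_l = \operatorname{supp}(g_l)$, $W_t = \operatorname{supp}(h_t)$, we need to show $\bigsqcup O_i \times \{i\} \prec \bigsqcup W_t \times \{t\}$.

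So the plan is: fix $i \in \{1, \dots, n\}$ and a closed set $F_i \subset O_i$. I want to produce an open cover of $F_i$ together with group elements and indices into $\{1, \dots, p\}$ arranging the pieces disjointly inside $\bigsqcup W_t \times \{t\}$. From $a \preccurlyeq b$, applied to $F_i \subset O_i$, I get open sets $\mathcal{U}_i = \{U^{(i)}_1, \dots, U^{(i)}_{J_i}\}$ covering $F_i$, group elements $s^{(i)}_j$, and indices $k^{(i)}_j \in \{1, \dots, m\}$ with $\bigsqcup_j s^{(i)}_j U^{(i)}_j \times \{k^{(i)}_j\} \subset \bigsqcup_l V_l \times \{l\}$. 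In particular, for each $j$, the set $s^{(i)}_j U^{(i)}_j$ is an open subset of $V_{k^{(i)}_j}$.

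The key step is to feed the image sets back into the hypothesis $b \preccurlyeq c$ — but that hypothesis quantifies over \emph{closed} subsets, whereas $s^{(i)}_j U^{(i)}_j$ is open. This is the main obstacle, and it is resolved by a shrinking argument using normality of the compact Hausdorff space $X$: since $F_i$ is compact (closed in compact $X$) and covered by the $U^{(i)}_j$, I can find open sets $U'^{(i)}_j$ with $\overline{U'^{(i)}_j} \subset U^{(i)}_j$ that still cover $F_i$ (a standard shrinking-of-finite-covers lemma). Then $\overline{s^{(i)}_j U'^{(i)}_j} = s^{(i)}_j \overline{U'^{(i)}_j} \subset s^{(i)}_j U^{(i)}_j \subset V_{k^{(i)}_j}$ is a \emph{closed} subset of $V_{k^{(i)}_j}$. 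Now I may apply $b \preccurlyeq c$: to the closed set $\overline{s^{(i)}_j U'^{(i)}_j} \subset V_{k^{(i)}_j} = \operatorname{supp}(g_{k^{(i)}_j})$, I obtain an open cover $\{P^{(i,j)}_1, \dots, P^{(i,j)}_{L_{i,j}}\}$, group elements $t^{(i,j)}_r$, and indices $q^{(i,j)}_r \in \{1, \dots, p\}$ with the images $t^{(i,j)}_r P^{(i,j)}_r \times \{q^{(i,j)}_r\}$ landing disjointly in $\bigsqcup_t W_t \times \{t\}$.

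Finally I compose: for the fixed $i$ and $F_i$, the family $\{ (s^{(i)}_j)^{-1} (P^{(i,j)}_r \cap s^{(i)}_j U'^{(i)}_j) : j, r\}$ is an open cover of $F_i$ (since the $U'^{(i)}_j$ cover $F_i$ and, for each $j$, the $P^{(i,j)}_r$ cover $\overline{s^{(i)}_j U'^{(i)}_j} \supset s^{(i)}_j U'^{(i)}_j$), with associated group elements $t^{(i,j)}_r s^{(i)}_j$ and indices $q^{(i,j)}_r$. It remains to check the disjointness-in-the-target condition: the sets $t^{(i,j)}_r s^{(i)}_j \big((s^{(i)}_j)^{-1}(P^{(i,j)}_r \cap s^{(i)}_j U'^{(i)}_j)\big) \times \{q^{(i,j)}_r\} \subset t^{(i,j)}_r P^{(i,j)}_r \times \{q^{(i,j)}_r\}$ must be pairwise disjoint across \emph{all} $i, j, r$. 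For fixed $i$ this follows from the disjointness produced by $b \preccurlyeq c$ applied at level $(i,j)$ combined with the disjointness of the $s^{(i)}_j U^{(i)}_j \times \{k^{(i)}_j\}$ (two images coming from different $j$'s sit over disjoint pieces of $\bigsqcup_l V_l \times \{l\}$, which is preserved under the second-stage maps since those respect the $V_l$-structure); across different $i$ one uses that the definition of $\prec$ only requires, for each $i$ separately, the existence of such a configuration — i.e., Definition 1.4 takes the disjoint union over $i$ but the cover of $F_i$ may depend on $i$. I expect the bookkeeping in this last disjointness verification, tracking which coordinate in $\{1,\dots,m\}$ each piece passes through, to be the most delicate part, though it is conceptually routine once the shrinking step is in place.
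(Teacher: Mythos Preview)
Your shrinking step and the overall composition idea are right, but the disjointness argument has a genuine gap.

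First, a misreading of Definition~1.4: the displayed disjoint union $\bigsqcup_{i=1}^n\bigsqcup_{j=1}^{J_i}$ ranges over \emph{all} pairs $(i,j)$, so the images must be pairwise disjoint across different $i$ as well. Your final sentence (``across different $i$ one uses that the definition \dots only requires, for each $i$ separately \dots'') is simply false.

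Second, and more fundamentally, applying $b \preccurlyeq c$ \emph{separately} for each pair $(i,j)$ gives you no control over how the resulting configurations interact. Suppose $(i_1,j_1)$ and $(i_2,j_2)$ are distinct pairs with the same first-stage target index $k^{(i_1)}_{j_1}=k^{(i_2)}_{j_2}=l$. The sets $s^{(i_1)}_{j_1} U'^{(i_1)}_{j_1}$ and $s^{(i_2)}_{j_2} U'^{(i_2)}_{j_2}$ are indeed disjoint in $V_l$, but the independently chosen group elements $t^{(i_1,j_1)}_{r_1}$ and $t^{(i_2,j_2)}_{r_2}$ can easily send these disjoint sets to overlapping subsets of the same $W_q\times\{q\}$. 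Your parenthetical claim that disjointness ``is preserved under the second-stage maps since those respect the $V_l$-structure'' is wrong: the second-stage maps are arbitrary group translations and preserve nothing of the sort.

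The fix, and this is what the paper does, is to invoke $b \preccurlyeq c$ only \emph{once}. After shrinking, collect for each $l\in\{1,\dots,m\}$ the closed set
\[
K_l \;=\; \bigsqcup\bigl\{\, s^{(i)}_j \overline{U'^{(i)}_j} : k^{(i)}_j = l,\ 1\le j\le J_i,\ 1\le i\le n \,\bigr\} \;\subset\; V_l
\]
(the union is disjoint because the first-stage images were). Now apply $b \preccurlyeq c$ with this single tuple $(K_1,\dots,K_m)$; the hypothesis hands you covers $\{W^{(l)}_p\}$ of each $K_l$ with images $t^{(l)}_p W^{(l)}_p \times \{d^{(l)}_p\}$ that are pairwise disjoint \emph{across all $l$ and $p$ simultaneously}. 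The composed pieces $R_{i,j,p,l} = U'^{(i)}_j \cap (s^{(i)}_j)^{-1} W^{(l)}_p$ (for $k^{(i)}_j = l$) then cover $F_i$, and their images $t^{(l)}_p s^{(i)}_j R_{i,j,p,l}\times\{d^{(l)}_p\}$ have the required global disjointness by a short case split: if $(l,p)$ differ, use the disjointness from the single application of $b\preccurlyeq c$; if $(l,p)$ agree but $(i,j)$ differ, the images sit inside $t^{(l)}_p\bigl(s^{(i_1)}_{j_1} U'^{(i_1)}_{j_1}\cap s^{(i_2)}_{j_2} U'^{(i_2)}_{j_2}\bigr)=\emptyset$.
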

\begin{proof}
First we write $a=(f_1,\dots, f_N)$, $b=(g_1,\dots, g_L)$ and $c=(h_1,\dots, h_M)$ for some integers $N,L,M\in \mathbb{N}^+$. Since $a\preccurlyeq b$, one has that for every $n\in\{1,\dots, N\}$ and closed set $F_n\subset \operatorname{supp}(f_n)$ there are a collection of open sets $\mathcal{U}_n=\{U^{(n)}_1,\dots, U^{(n)}_{J_n}\}$ forming a cover of $F_n$, $s^{(n)}_1,\dots, s^{(n)}_{J_n}\in G$ and  $k_1^{(n)},\dots, k_{J_n}^{(n)}\in \{1,\dots, L\}$ such that
\[\bigsqcup_{n=1}^N\bigsqcup_{j=1}^{J_n}s^{(n)}_jU^{(n)}_j\times \{k^{(n)}_j\}\subset \bigsqcup_{l=1}^L \operatorname{supp}(g_l)\times \{l\}.\]
Then compactness and normality of the space $X$ shows that there is a family of open sets $\{V^{(n)}_j: j=1,\dots, J_n, n=1,\dots, N\}$ such that for each $n$ the collection $\mathcal{V}_n=\{V^{(n)}_j: j=1,\dots, J_n\}$ is a cover of $F_n$ and $\overline{V^{(n)}_j}\subset U_j^{(n)}$ for every $j=1,\dots, J_n$. Therefore, one has
\[\bigsqcup_{n=1}^N\bigsqcup_{j=1}^{J_n}s^{(n)}_j\overline{V^{(n)}_j}\times \{k^{(n)}_j\}\subset \bigsqcup_{l=1}^L \operatorname{supp}(g_l)\times \{l\}.\]

Define $\mathcal{D}_{l}=\{s^{(n)}_j\overline{V^{(n)}_j}: k^{(n)}_j=l, j=1,\dots, J_n, n=1,\dots, N\}$ and write $K_l=\bigsqcup \mathcal{D}_{l}$, which is closed and a subset of $\operatorname{supp}(g_l)$. Now because $b\preccurlyeq c$, for all $K_l\subset \operatorname{supp}(g_l)$ there are a collection of open sets $\mathcal{W}_l=\{W^{(l)}_1,\dots, W^{(l)}_{P_l}\}$ forming a cover of $K_l$, $t^{(l)}_1,\dots, t^{(l)}_{P_l}\in G$ and  $d_1^{(l)},\dots, d_{P_l}^{(l)}\in \{1,\dots, M\}$ such that
\[\bigsqcup_{l=1}^L\bigsqcup_{p=1}^{P_l}t^{(l)}_pW^{(l)}_p\times \{d^{(l)}_p\}\subset \bigsqcup_{m=1}^M \operatorname{supp}(h_m)\times \{m\}.\]

Define $R_{n,j,p,l}=V^{(n)}_j\cap {(s^{(n)}_j)}^{-1}W_p^{(l)}$ for $n, j, p, l$ satisfying $k^{(n)}_j=l$. Then we observe that the family
\[\mathcal{R}_n=\{R_{n,j,p,l}:j=1,\dots, J_n, l=1,\dots, L, k^{(n)}_j=l, p=1,\dots, P_l\} \]
forms an open cover of $F_n$. Indeed, first fix an $x\in F_n$. Then there is an $V_j^{(n)}$ such that $x\in V_j^{(n)}$. Now taking $l=k^{(n)}_j$ we have $s^{(n)}_j\overline{V^{(n)}_j}\subset K_l\subset \bigcup_{p=1}^l W_p^{(l)}$, which implies that $s^{(n)}_jx\in s^{(n)}_jV^{(n)}_j\cap W_p^{(l)}$ for some $p\leq P_l$. Thus, we have $x\in R_{n,j,p,l}$.

In addition, we define $r_{n,j,p,l}=t^{(l)}_ps^{(n)}_j\in G$ for $n,j, p,l$ satisfying $k^{(n)}_j=l$. Now, we claim that the family $\mathcal{T}=\{r_{n,j,p,l}R_{n,j,p,l}\times \{d^{(l)}_p\}:j=1,\dots, J_n, l=1,\dots, L, k^{(n)}_j=l, p=1,\dots, P_l\}$ is disjoint. To simplify the notation, we write $T_{n,j,p,l}=r_{n,j,p,l}R_{n,j,p,l}\times \{d^{(l)}_p\}$ and have
\[T_{n,j,p,l}=(t^{(l)}_ps^{(n)}_jV^{(n)}_j\cap t^{(l)}_pW_p^{(l)})\times \{d^{(l)}_p\}\subset t^{(l)}_pW^{(l)}_p\times \{d^{(l)}_p\}.\]
Now, suppose that $T_{n_1,j_1,p_1,l_1}$ and $T_{n_2,j_2,p_2,l_2}$ are different. If $l_1\neq l_2$ or $p_1\neq p_2$ then by our construction one has
\[(t^{(l_1)}_{p_1}W^{(l_1)}_{p_1}\times \{d^{(l_1)}_{p_1}\})\cap (t^{(l_2)}_{p_2}W^{(l_2)}_{p_2}\times \{d^{(l_2)}_{p_2}\})=\emptyset,\]
which implies that $T_{n_1,j_1,p_1,l_1}\cap T_{n_2,j_2,p_2,l_2}=\emptyset$. Otherwise we have $n_1\neq n_2$ or $j_1\neq j_2$ while there are $l$ and $p$ such that $l_1=l_2=l$, $p_1=p_2=p$ and $k^{(n_1)}_{j_1}=k^{(n_2)}_{j_2}=l$. In this case, first by the construction one has
\[(s^{(n_1)}_{j_1}V^{(n_1)}_{j_1}\times \{k^{(n_1)}_{j_1}\})\cap (s^{(n_2)}_{j_2}V^{(n_2)}_{j_2}\times \{k^{(n_2)}_{j_2}\})=\emptyset.\]
Thus, $s^{(n_1)}_{j_1}V^{(n_1)}_{j_1}\cap s^{(n_2)}_{j_2}V^{(n_2)}_{j_2}=\emptyset$ because $k^{(n_1)}_{j_1}=k^{(n_2)}_{j_2}=l$. This fact shows $T_{n_1,j_1,p_1,l_1}\cap T_{n_2,j_2,p_2,l_2}=\emptyset$ as desired. So far we have verified that the family $\mathcal{T}$ above is disjoint.

On the other hand, considering the fact that
\[T_{n,j,p,l}\subset t^{(l)}_pW^{(l)}_p\times \{d^{(l)}_p\} \subset \bigsqcup_{m=1}^M\operatorname{supp}(h_m)\times \{m\}\]
for all $T_{n,j,p,l}$, we have established the relation
\[\bigsqcup_{n=1}^N\bigsqcup_{l=1}^L \bigsqcup_{p=1}^{P_l}\bigsqcup_{\{1\leq j\leq J_n: k^{(n)}_j=l\}}r_{n,j,p,l}R_{n,j,p,l}\times \{d^{(l)}_p\}\subset \bigsqcup_{m=1}^M \operatorname{supp}(h_m)\times \{m\},\]
which verifies that $a\preccurlyeq c$ as desired.
\end{proof}

Now we can define a relation on $K(X, G)$ by setting $a\approx b$ if $a\preccurlyeq b$  and $b\preccurlyeq a$ for $a,b\in K(X, G)$. To see that this relation is in fact an equivalence relation, first it is not hard to verify directly that $a\approx a$ for all $a\in K(X, G)$ . In addition, by the definition of the relation $\approx$, one has $a\approx b$ implying $b\approx a$ trivially. Now suppose $a\approx b$ and $b\approx c$. By definition one has $a\preccurlyeq b\preccurlyeq c$ and $c\preccurlyeq b\preccurlyeq a$. Then Lemma 2.2 entails that $a\preccurlyeq c$ and $c\preccurlyeq a$. This establishes $a\approx c$.

We write $W(X, G)$ for the quotient $K(X, G)/\approx$ and define an operation on $W(X, G)$ by $[a]+[b]=[(a,b)]$, where $(a,b)$ is defined to be the concatenation of $a=(f_1,\dots, f_n)$ and $b=(g_1,\dots, g_m)$, i.e., $(a,b)=(f_1,\dots, f_n,g_1,\dots, g_m)$. It is not hard to see that if $a_1\preccurlyeq a_2$ and $b_1\preccurlyeq b_2$ then $(a_1, b_1)\preccurlyeq (a_2, b_2)$. Then Lemma 2.2 implies the operation is well-defined and it can be additionally verified that the operation is abelian, i.e, $[a]+[b]=[b]+[a]$.  Moreover, we endow $W(X, G)$ with the natural order by declaring $[a]\leq [b]$ if $a\preccurlyeq b$. Thus $W(X, G)$ is a well-defined abelian partially ordered semigroup.

The following proposition shows that our relation $\preccurlyeq$ naturally relates to the Cuntz subequivalence relation in the context of $C^*$-algebras. For Cuntz subequivalence relation, we refer to \cite{A-P-T} as a reference. Let $A$ be a $C^\ast$-algebra. We write $M_\infty(A)=\bigcup_{n=1}^\infty M_n(A)$ (viewing $M_n(A)$ as an upper left-hand corner in $M_m(A)$ for $m>n$). Let $a,b$ be two positive elements in $M_n(A)_+$ and $M_m(A)_+$, respectively. We write $a\precsim b$ if there exists a sequence $(r_n)$ in $M_{m,n}(A)$ with $r_n^\ast
br_n\rightarrow a$. We denote by $\operatorname{Diag}(f_1,\dots, f_n)$ the diagonal matrix with entries $f_1,\dots, f_n$. For every $f\in C(X)_+$ and $\epsilon>0$,  the function $(f-\epsilon)_+$ is defined via the functional calculus as $H_{\epsilon}(f)$ in the $C^*$-algebra $C(X)$ where $H_{\epsilon}(t)=\max\{t-\epsilon, 0\}$. Note that $(f-\epsilon)_+(x)=f(x)-\epsilon$ if $f(x)\geq \epsilon$ while $(f-\epsilon)_+(x)=0$ if $f(x)< \epsilon$.

\begin{proposition}
Let $a=(f_1,\dots, f_n)$ and $b=(g_1,\dots, g_m)\in K(X, G)$. If $a\preccurlyeq b$ then $\operatorname{Diag}(f_1,\dots, f_n)\precsim\operatorname{Diag}(g_1,\dots, g_m)$
in the $C^\ast$-algebra $C(X)\rtimes_r G$.
\end{proposition}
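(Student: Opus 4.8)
\emph{Proof proposal.}

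The plan is to read off the Cuntz subequivalence directly from the set-theoretic data witnessing $a\preccurlyeq b$, working one cut-down $(\,\cdot\,-\epsilon)_+$ at a time. I will use freely the following standard facts about $\precsim$ (all in \cite{A-P-T}): (1) $c\precsim d$ provided $(c-\epsilon)_+\precsim d$ for every $\epsilon>0$, together with $(\operatorname{Diag}(f_1,\dots,f_n)-\epsilon)_+=\operatorname{Diag}((f_1-\epsilon)_+,\dots,(f_n-\epsilon)_+)$; (2) for positive elements $c_1+\dots+c_k\precsim c_1\oplus\dots\oplus c_k$, with $\sim$ in place of $\precsim$ when the $c_t$ are pairwise orthogonal; (3) $\precsim$ is preserved under $\ast$-automorphisms and under direct sums. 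I will also use that in the commutative algebra $C(X)$ one has $f\precsim g$ whenever $\operatorname{supp}(f)\subseteq\operatorname{supp}(g)$ (a routine partition-of-unity argument: cut down, divide by $g$ on a compact set where $g$ is bounded away from $0$), and that in $C(X)\rtimes_r G$ the canonical unitaries satisfy $u_sfu_s^\ast=\alpha_s(f)$, so $f\sim\alpha_s(f)$ for $f\in C(X)_+$.

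Fix $\epsilon>0$. For each $i$ I would set $F_i=\{x:f_i(x)\geq\epsilon\}$, a compact subset of $\operatorname{supp}(f_i)$, and apply the hypothesis $a\preccurlyeq b$ to these closed sets to obtain finite open covers $\{U^{(i)}_1,\dots,U^{(i)}_{J_i}\}$ of $F_i$, group elements $s^{(i)}_j$, and indices $k^{(i)}_j\in\{1,\dots,m\}$ with $\bigsqcup_{i,j}s^{(i)}_jU^{(i)}_j\times\{k^{(i)}_j\}\subseteq\bigsqcup_l\operatorname{supp}(g_l)\times\{l\}$; concretely, for each fixed $l$ the sets $\{s^{(i)}_jU^{(i)}_j:k^{(i)}_j=l\}$ are pairwise disjoint and contained in $\operatorname{supp}(g_l)$. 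Then I would pick, for each $i$, functions $h^{(i)}_1,\dots,h^{(i)}_{J_i}\in C(X)_+$ subordinate to the cover, with $\operatorname{supp}(h^{(i)}_j)\subseteq U^{(i)}_j$ and $\sum_jh^{(i)}_j\equiv 1$ on $\{f_i>\epsilon\}$, and put $f_{i,j}=(f_i-\epsilon)_+h^{(i)}_j$ and $\tilde f_{i,j}=\alpha_{s^{(i)}_j}(f_{i,j})\in C(X)_+$, so that $\operatorname{supp}(\tilde f_{i,j})\subseteq s^{(i)}_jU^{(i)}_j\subseteq\operatorname{supp}(g_{k^{(i)}_j})$.

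The estimate then assembles in two moves. On the source side, $(f_i-\epsilon)_+=\sum_jf_{i,j}$ for each $i$, so by (2) and (3), $(f_i-\epsilon)_+\precsim\bigoplus_jf_{i,j}\sim\bigoplus_j\tilde f_{i,j}$, and summing over $i$ gives $\operatorname{Diag}((f_1-\epsilon)_+,\dots,(f_n-\epsilon)_+)\precsim\operatorname{Diag}(\tilde f_{i,j}:\text{all }i,j)$. On the target side, permuting the diagonal (a unitary conjugation) regroups the $\tilde f_{i,j}$ by the value $l=k^{(i)}_j$; for each $l$ the family $\{\tilde f_{i,j}:k^{(i)}_j=l\}$ is pairwise orthogonal, its supports lying in the pairwise disjoint sets $s^{(i)}_jU^{(i)}_j$, so (2) gives $\operatorname{Diag}(\tilde f_{i,j}:k^{(i)}_j=l)\sim\sum_{k^{(i)}_j=l}\tilde f_{i,j}$, and the latter is a single positive function of $C(X)$ whose open support lies in $\operatorname{supp}(g_l)$, hence is $\precsim g_l$. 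Combining, $\operatorname{Diag}(\tilde f_{i,j}:\text{all})\sim\bigoplus_l\operatorname{Diag}(\tilde f_{i,j}:k^{(i)}_j=l)\precsim\bigoplus_lg_l=\operatorname{Diag}(g_1,\dots,g_m)$. Chaining the two moves gives $(\operatorname{Diag}(f_1,\dots,f_n)-\epsilon)_+\precsim\operatorname{Diag}(g_1,\dots,g_m)$, and letting $\epsilon\downarrow 0$ via (1) finishes the argument.

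The one genuinely delicate point is the bookkeeping that lets a single copy of $\operatorname{Diag}(g_1,\dots,g_m)$ absorb everything. The partition of unity is what bridges the gap between ``$(f_i-\epsilon)_+$ is supported in the union $\bigcup_jU^{(i)}_j$'' and ``each translated piece $\tilde f_{i,j}$ is Cuntz-dominated by a single $g_l$''; and the disjointness built into Definition 1.4 is exactly what makes the translated pieces sitting over a common $l$ mutually orthogonal, so that $\sum\precsim\bigoplus\sim g_l$ rather than merely $\precsim g_l\oplus\dots\oplus g_l$. Everything else is the standard calculus of Cuntz comparison.
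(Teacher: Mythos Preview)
Your argument is correct and follows essentially the same route as the paper's own proof: fix $\epsilon>0$, take a partition of unity subordinate to the cover of each $F_i$ produced by the relation $a\preccurlyeq b$, split $(f_i-\epsilon)_+$ accordingly, translate each piece by the corresponding canonical unitary, regroup the translated pieces by target index $l$, use the disjointness in Definition~1.4 to collapse each group back to a single function whose support sits inside $\operatorname{supp}(g_l)$, and then let $\epsilon\to 0$. The only cosmetic difference is that you carry the factor $(f_i-\epsilon)_+$ through the decomposition (writing $f_{i,j}=(f_i-\epsilon)_+h^{(i)}_j$), whereas the paper works directly with the partition-of-unity functions $h^{(i)}_j$ and invokes $(f_i-\epsilon)_+\precsim\sum_j h^{(i)}_j$ via support containment; both choices lead to the same chain of Cuntz comparisons.
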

\begin{proof}
In light of Proposition 2.17 in \cite{A-P-T}, it suffices to prove that $\operatorname{Diag}((f_1-\epsilon)_+,\dots, (f_n-\epsilon)_+)\precsim\operatorname{Diag}(g_1,\dots, g_m)$ for all $\epsilon>0$. Now, let $\epsilon>0$ and define $F_i=\overline{\operatorname{supp}((f_i-\epsilon)_+)}$ for $i=1,\dots, n$. Since $a\preccurlyeq b$, for $i=1,\dots, n$ there are a collection of open sets, $\mathcal{U}_i=\{U^{(n)}_i,\dots, U^{(i)}_{J_i}\}$ forming a cover of $F_i$, $s^{(i)}_1,\dots, s^{(i)}_{J_i}\in G$ and  $k_1^{(i)},\dots, k_{J_i}^{(i)}\in \{1,\dots, m\}$ such that
\[\bigsqcup_{i=1}^n\bigsqcup_{j=1}^{J_i}s^{(i)}_jU^{(i)}_j\times \{k^{(i)}_j\}\subset \bigsqcup_{l=1}^m \operatorname{supp}(g_l)\times \{l\}.\]

Let $\{h_j^{i}: j=1,\dots, J_i\}$ be a partition of unity subordinate to the cover $\mathcal{U}_i$ of $F_i$.
Then $F_i\subset \operatorname{supp}(\sum_{j=1}^{J_i}h_j^i)$, which implies that $(f_i-\epsilon)_+\precsim \sum_{j=1}^{J_i}h_j^i$ by Proposition 2.5 in \cite{A-P-T}. Then we have
\[\operatorname{Diag}((f_1-\epsilon)_+,\dots, (f_n-\epsilon)_+)=\bigoplus_{i=1}^n(f_i-\epsilon)_+\precsim\bigoplus_{i=1}^n(\sum_{j=1}^{J_i}h_j^i)\precsim \bigoplus_{i=1}^n\bigoplus_{j=1}^{J_i}h_j^i.\]
Define a unitary $u=\bigoplus_{i=1}^n\bigoplus_{j=1}^{J_i}u_{s^{(i)}_j}$, where all $u_{s^{(i)}_j}$ are canonical unitaries in the crossed product. Then we have
\[\bigoplus_{i=1}^n\bigoplus_{j=1}^{J_i}h_j^i\sim u(\bigoplus_{i=1}^n\bigoplus_{j=1}^{J_i}h_j^i)u^*=\bigoplus_{i=1}^n\bigoplus_{j=1}^{J_i}\alpha_{s^{(i)}_j}(h_j^i).\]
To simplify the notation, we define the index set $\mathcal{I}_l=\{(i,j): j=1,\dots, J_i, i=1,\dots, n,  k_{j}^{(i)}=l\}$. Then observe that the collection $\{\operatorname{supp}(\alpha_{s^{(i)}_j}(h_j^i))\subset s^{(i)}_jU_j^{(i)}: (i,j)\in \mathcal{I}_l\}$ is disjoint for each $l=1,\dots, m$. This implies that
\[\bigoplus_{i=1}^n\bigoplus_{j=1}^{J_i}\alpha_{s^{(i)}_j}(h_j^i)\sim \bigoplus_{l=1}^m\bigoplus_{(i,j)\in \mathcal{I}_l}\alpha_{s^{(i)}_j}(h_j^i)\sim \bigoplus_{l=1}^m(\sum_{(i,j)\in \mathcal{I}_l}\alpha_{s^{(i)}_j}(h_j^i)).\]
Finally, note that
\[\operatorname{supp}(\sum_{(i,j)\in \mathcal{I}_l}\alpha_{s^{(i)}_j}(h_j^i))=\bigsqcup_{(i,j)\in \mathcal{I}_l}\operatorname{supp}(\alpha_{s^{(i)}_j}(h_j^i))\subset \operatorname{supp}(g_l)\] for each $l=1,\dots, m$. This implies that $\sum_{(i,j)\in \mathcal{I}_l}\alpha_{s^{(i)}_j}(h_j^i)\precsim g_l$, which further entails that
\[\bigoplus_{l=1}^m(\sum_{(i,j)\in \mathcal{I}_l}\alpha_{s^{(i)}_j}(h_j^i))\precsim \bigoplus_{l=1}^mg_l=\operatorname{Diag}(g_1,\dots, g_m).\]

We have verified that
\[\operatorname{Diag}((f_1-\epsilon)_+,\dots, (f_n-\epsilon)_+)\precsim \operatorname{Diag}(g_1,\dots, g_m)\] for every $\epsilon>0$ and thus we have $\operatorname{Diag}(f_1,\dots, f_n)\precsim\operatorname{Diag}(g_1,\dots, g_m)$.
\end{proof}

We end this section by remarking that our generalized type semigroup $W(X, G)$ can also be used to study paradoxical decompositions in the context of topological dynamics. The paradoxical decomposition property can be formulated by $2[a]\leq [a]$ in $W(X, G)$ for all $a\in K(X, G)$. Note that this condition is equivalent to a notion for an action called \textit{paradoxical comparison} introduced in \cite{M2} where paradoxical comparison is also proved implying the pure infiniteness of the reduced crossed product $C(X)\rtimes_r G$ in the case that there are finitely many $G$-invariant closed sets.

\section{States on $W(X, G)$ and the proof of Theorem A}
We first recall some general background information about states on preordered abelian semigroups.

A \textit{state} on a preordered monoid $(W, +, \leq)$ is an order-preserving morphism $D: W\rightarrow [0,\infty]$ with $D(0)=0$. We denote by $S(W)$ the set consisting of all states of $W$. We write $S(W, x)=\{D\in S(W): D(x)=1\}$. The following proposition due to Ortega, Perera, and R{\o}rdam is very useful.

\begin{proposition}(\cite[Proposition 2.1]{O-P-R})
Let $(W,+,\leq)$ be an ordered abelian semigroup, and let $x,y\in W$. Then the following conditions are equivalent:
\begin{enumerate}[label=(\roman*)]
\item There exists $k\in \mathbb{N}$ such that $(k+1)x\leq ky$.

\item There exists $k_0\in \mathbb{N}$ such that $(k+1)x\leq ky$ for every $k\geq k_0$.

\item There exists $m\in \mathbb{N}$ such that $x\leq my$ and $D(x)<D(y)$ for every state $D\in S(W,y)$.
\end{enumerate}

\end{proposition}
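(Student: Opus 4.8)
The plan is to prove the implications (ii) $\Rightarrow$ (i), (i) $\Rightarrow$ (ii), (i) $\Rightarrow$ (iii) and (iii) $\Rightarrow$ (i), of which (ii) $\Rightarrow$ (i) is trivial (take $k=k_0$). Throughout I will use that the order on $W$ is \emph{positive}, i.e.\ $w\le w+w'$ for all $w,w'\in W$; for $W(X,G)$ this is the observation that $a\preccurlyeq(a,b)$ for all $a,b\in K(X,G)$. The starting point is a reformulation of (i): condition (i) holds if and only if there are $p,q\in\mathbb{N}^+$ with $p>q$ and $px\le qy$. Indeed, from such $p,q$ positivity gives $px\le qy\le(p-1)y$, which is $(k+1)x\le ky$ with $k=p-1$; the converse is immediate. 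Equivalently, the failure of (i) reads: whenever $px\le qy$ with $p,q\in\mathbb{N}^+$, one has $p\le q$.

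For (i) $\Rightarrow$ (ii), assume $(k+1)x\le ky$ and let $n\ge k^2$. Write $n=qk+r$ with $q=\lfloor n/k\rfloor\ge k$ and $0\le r<k$; then $q\ge k>r$, so $r+1\le q$ and $n+1=qk+r+1\le q(k+1)$. Hence, using positivity and then multiplying the hypothesis by $q$,
\[(n+1)x\le q(k+1)x\le q(ky)=(qk)y\le ny,\]
so (ii) holds with $k_0=k^2$. For (i) $\Rightarrow$ (iii), take $m=k$: positivity gives $x\le(k+1)x\le ky=my$, and for any $D\in S(W,y)$, applying $D$ to $(k+1)x\le ky$ yields $(k+1)D(x)\le kD(y)=k$, whence $D(x)\le k/(k+1)<1=D(y)$.

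The substantive direction is (iii) $\Rightarrow$ (i), which I will prove contrapositively. Assume (i) fails. If $x\not\le my$ for all $m\in\mathbb{N}^+$, then the first clause of (iii) already fails; so assume $x\le m_0y$ for some $m_0\ge1$. Define $\phi\colon W\to[0,\infty]$ by $\phi(w)=\inf\{p/q:\ p,q\in\mathbb{N}^+,\ qw\le py\}$, with $\inf\emptyset=\infty$; then $\phi$ is order-preserving and subadditive, since $q_1w_1\le p_1y$ and $q_2w_2\le p_2y$ give $q_1q_2(w_1+w_2)\le(q_2p_1+q_1p_2)y$. Two facts are the crux. First, $\phi(x)\ge1$, because by the failure of (i) every ratio $p/q$ appearing in the infimum for $\phi(x)$ has $p\ge q$. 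Second, $\phi(y)=1$: clearly $\phi(y)\le1$, and if $\phi(y)<1$ then $qy\le py$ for some $q>p\ge1$, so $qy\le(q-1)y$, i.e.\ $(k+1)y\le ky$ with $k=q-1$; an induction then gives $ny\le ky$ for all $n\ge k$, and consequently $(k+1)x\le(k+1)m_0\,y\le ky$ (since $(k+1)m_0>k$), contradicting the failure of (i).

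It remains to produce from $\phi$ an actual state. Passing to the submonoid $M=\langle x,y\rangle\subseteq W$, in which $y$ is an order unit (because $ax+by\le(am_0+b)y$), and invoking the standard state-existence theorem for positively ordered abelian monoids — whereby $\sup\{D(x):D\in S(M,y)\}$ equals the upper envelope computed exactly as $\phi(x)$, which is $\ge1$ — we obtain a state $D_0\in S(M,y)$ with $D_0(x)\ge1$. Extending $D_0$ to a state $D$ on all of $W$ by the usual Hahn--Banach argument for ordered monoids (permitting the value $+\infty$ off $M$) yields $D\in S(W,y)$ with $D(x)\ge1=D(y)$, contradicting the second clause of (iii). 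The main obstacle is precisely this last passage, from the subadditive gauge $\phi$ to an additive state realizing $D(y)=1$ and $D(x)\ge1$ simultaneously; the other delicate point is the verification that $\phi(y)=1$ rather than merely $\phi(y)\le1$, which is where the hypothesis $x\le m_0y$ is genuinely used. Everything else is elementary arithmetic in $W$ resting on positivity of the order.
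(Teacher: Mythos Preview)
The paper does not supply its own proof of this proposition; it is quoted verbatim from Ortega--Perera--R{\o}rdam \cite{O-P-R}, so there is no in-paper argument to compare against. Your argument follows the standard Goodearl--Handelman line that underlies that reference and is essentially correct: the easy implications are handled cleanly, and for the contrapositive of (iii)$\Rightarrow$(i) you correctly build the gauge $\phi$, establish $\phi(y)=1$ (this is the one genuinely delicate point, and your use of $x\le m_0y$ there is exactly right) together with $\phi(x)\ge1$, and then appeal to state existence on a submonoid where $y$ is an order unit.

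One refinement is worth recording. The parenthetical ``permitting the value $+\infty$ off $M$'' is imprecise if read literally, since elements of $W\setminus M$ may still lie below elements of $M$ and must then receive finite values for the extension to be order-preserving. The tidy way to run the extension is to work not with $M=\langle x,y\rangle$ but with the order ideal $I_y=\{w\in W:\ w\le ny\ \text{for some }n\}$, on which $y$ is a genuine order unit and which contains $x$; obtain the desired normalized state there by the Goodearl--Handelman argument; and only then set $D\equiv+\infty$ on $W\setminus I_y$. That last step is additive and order-preserving precisely because $I_y$ is downward closed under the positive order. With this adjustment your proof is complete. Your explicit assumption of positivity ($w\le w+w'$) is appropriate---it is part of the standing hypotheses in \cite{O-P-R}, and it holds for $W(X,G)$ as you note.
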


In this section, we always assume that the space $X$ is  metrizable. In addition, for $a=(f_1,\dots, f_n)\in K(X, G)$, we denote by $(a-\epsilon)_+$ the element $((f_1-\epsilon)_+,\dots, (f_n-\epsilon)_+)$ in $K(X, G)$. It is not hard to verify $((a-\epsilon)_+-\delta)_+ =(a-\epsilon-\delta)_+$ for $a\in K(X, G)$, $\epsilon>0$ and $\delta>0$.

In parallel with the Cuntz semigroup (for example, see \cite{A-P-T}), we have the following fact.

\begin{proposition}
For all $a, b\in K(X, G)$, the following are equivalent.
\begin{enumerate}[label=(\roman*)]
\item $a\preccurlyeq b$;

\item for all $\epsilon>0$ one has $(a-\epsilon)_+\preccurlyeq b$;

\item for all $\epsilon>0$  there exists a $\delta>0$ such that $(a-\epsilon)_+\preccurlyeq (b-\delta)_+$;
\end{enumerate}
\end{proposition}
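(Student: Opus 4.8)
The plan is to prove the cycle (i)$\Rightarrow$(iii)$\Rightarrow$(ii)$\Rightarrow$(i), mirroring the standard Cuntz-semigroup argument but re-proving each step directly from Definition 1.4 (the covering definition of $\prec$), since here $\preccurlyeq$ is not an algebraic order.

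\textbf{(iii)$\Rightarrow$(ii) and (ii)$\Rightarrow$(i).} The implication (iii)$\Rightarrow$(ii) is immediate: given $\epsilon>0$, pick the $\delta>0$ from (iii), note $(b-\delta)_+\preccurlyeq b$ (the open support of $(g_l-\delta)_+$ is contained in $\operatorname{supp}(g_l)$, so one covers a closed subset of $\operatorname{supp}((g_l-\delta)_+)$ by the single set $\operatorname{supp}((g_l-\delta)_+)$ with $s=e$), and then apply transitivity (Lemma 2.2). For (ii)$\Rightarrow$(i) I would fix $n\in\{1,\dots,N\}$ (writing $a=(f_1,\dots,f_N)$) and a closed set $F_n\subset\operatorname{supp}(f_n)$; since $F_n$ is compact and contained in the increasing union $\bigcup_{\epsilon>0}\operatorname{supp}((f_n-\epsilon)_+)$ of open sets, compactness gives $F_n\subset\operatorname{supp}((f_n-\epsilon)_+)$ for $\epsilon$ small enough. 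Here I must be careful: the witnessing data in Definition 1.4 is required \emph{simultaneously} for all indices $i=1,\dots,N$, so I should choose a single $\epsilon>0$ that works for all the finitely many chosen closed sets $F_1,\dots,F_N$, then apply (ii) with that $\epsilon$ to get the cover, group elements, and labels realizing $(a-\epsilon)_+\preccurlyeq b$, which are exactly what is needed for $a\preccurlyeq b$.

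\textbf{(i)$\Rightarrow$(iii).} This is the substantive step. Assume $a\preccurlyeq b$ and fix $\epsilon>0$. Put $F_i=\overline{\operatorname{supp}((f_i-\epsilon/2)_+)}$, a closed subset of $\operatorname{supp}(f_i)$ for each $i$. Applying $a\preccurlyeq b$ to this family yields open covers $\mathcal{U}_i=\{U^{(i)}_1,\dots,U^{(i)}_{J_i}\}$ of $F_i$, elements $s^{(i)}_j\in G$ and labels $k^{(i)}_j\in\{1,\dots,m\}$ with
\[
\bigsqcup_{i=1}^n\bigsqcup_{j=1}^{J_i}s^{(i)}_jU^{(i)}_j\times\{k^{(i)}_j\}\subset\bigsqcup_{l=1}^m\operatorname{supp}(g_l)\times\{l\}.
\]
By compactness and normality I shrink each $U^{(i)}_j$ to an open $V^{(i)}_j$ with $\overline{V^{(i)}_j}\subset U^{(i)}_j$ so that $\{V^{(i)}_j\}_j$ still covers $F_i$; then $K:=\bigsqcup_{i,j}s^{(i)}_j\overline{V^{(i)}_j}\times\{k^{(i)}_j\}$ is a compact subset of the open set $\bigsqcup_l\operatorname{supp}(g_l)\times\{l\}$. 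For each $l$, the slice $K_l:=\bigcup\{s^{(i)}_j\overline{V^{(i)}_j}:k^{(i)}_j=l\}$ is a compact subset of $\operatorname{supp}(g_l)=\bigcup_{\delta>0}\operatorname{supp}((g_l-\delta)_+)$, so $K_l\subset\operatorname{supp}((g_l-\delta)_+)$ for $\delta$ small; taking the minimum over the finitely many $l$ gives a single $\delta>0$ with
\[
\bigsqcup_{i=1}^n\bigsqcup_{j=1}^{J_i}s^{(i)}_j\overline{V^{(i)}_j}\times\{k^{(i)}_j\}\subset\bigsqcup_{l=1}^m\operatorname{supp}((g_l-\delta)_+)\times\{l\}.
\]
It remains to check this data witnesses $(a-\epsilon)_+\preccurlyeq(b-\delta)_+$: given a closed $F'_i\subset\operatorname{supp}((f_i-\epsilon)_+)\subset\operatorname{supp}((f_i-\epsilon/2)_+)\subset F_i$, the cover $\{V^{(i)}_j\}_j$ of $F_i$ restricts to an open cover of $F'_i$, and the containment above (using $s^{(i)}_jV^{(i)}_j\subset s^{(i)}_j\overline{V^{(i)}_j}$) provides the required disjoint inclusion into $\bigsqcup_l\operatorname{supp}((g_l-\delta)_+)\times\{l\}$. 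Hence $(a-\epsilon)_+\preccurlyeq(b-\delta)_+$.

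\textbf{Main obstacle.} The only real subtlety is bookkeeping: Definition 1.4 demands the covering/embedding data for \emph{all} source indices $i$ at once, so at each point where a parameter ($\epsilon$ in (ii)$\Rightarrow$(i), $\delta$ in (i)$\Rightarrow$(iii)) is produced by a compactness argument applied index-by-index, I must intersect over the finitely many indices to obtain one uniform parameter before invoking the hypothesis — otherwise the pieces won't assemble into a single instance of $\preccurlyeq$. Everything else is the routine transfer of the ``$(\,\cdot\,-\epsilon)_+$'' manipulations from the Cuntz semigroup, using only that $\operatorname{supp}((f-\epsilon)_+)$ increases to $\operatorname{supp}(f)$ and that $\overline{\operatorname{supp}((f-\epsilon)_+)}\subset\operatorname{supp}((f-\epsilon/2)_+)\subset\operatorname{supp}(f)$.
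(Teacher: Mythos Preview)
Your proposal is correct and follows essentially the same approach as the paper: the paper proves (i)$\Leftrightarrow$(ii) first and then (i)$\Leftrightarrow$(iii), while you run the cycle (i)$\Rightarrow$(iii)$\Rightarrow$(ii)$\Rightarrow$(i), but every substantive step (compactness to uniformize $\epsilon$ over the finitely many indices, shrinking the cover so closures land inside $\operatorname{supp}(g_l)$, then choosing a single $\delta$ as the minimum over $l$) is identical. One minor simplification: in (i)$\Rightarrow$(iii) you can take $F_i=\overline{\operatorname{supp}((f_i-\epsilon)_+)}$ directly rather than using $\epsilon/2$, since any closed $F'_i\subset\operatorname{supp}((f_i-\epsilon)_+)$ is automatically contained in $F_i$; the halving buys nothing here.
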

\begin{proof}
Write $a=(f_1,\dots, f_n)$ and $b=(g_1,\dots, g_m)$. Then by definition we have $(a-\epsilon)_+=((f_1-\epsilon)_+,\dots, (f_n-\epsilon)_+)$. Consider for each $1\leq i\leq n$, one has  $\operatorname{supp}((f_i-\epsilon)_+)\subset \operatorname{supp}(f_i)$, which implies that $(a-\epsilon)_+\preccurlyeq a$. This fact shows that (i)$\Rightarrow$(ii).

To show (ii)$\Rightarrow$(i), first for every $1\leq i\leq n$ and closed set $F_i\subset
\operatorname{supp}(f_i)$, there is an $\epsilon_i>0$ such that $F_i\subset \{x\in X : f_i(x)>\epsilon_i\}\subset \operatorname{supp}(f_i)$. Define $\epsilon=\min\{\epsilon_i: 1\leq i\leq n\}$. Then, $F_i\subset \{x\in X :
f_i(x)>\epsilon\}=\operatorname{supp}((f_i-\epsilon)_+)\subset \operatorname{supp}(f_i)$ for all $i$. Now, since
$(a-\epsilon)_+\preccurlyeq b$, there are a collection of open sets $\mathcal{U}_i=\{U^{(i)}_1,\dots,
U^{(i)}_{J_i}\}$ forming a cover of $F_i$, $s^{(i)}_1,\dots, s^{(i)}_{J_i}\in G$ and  $k_1^{(i)},\dots,
k_{J_i}^{(i)}\in \{1,\dots, m\}$ such that
\[\bigsqcup_{i=1}^n\bigsqcup_{j=1}^{J_i}s^{(i)}_jU^{(i)}_j\times \{k^{(i)}_j\}\subset \bigsqcup_{l=1}^m \operatorname{supp}(g_l)\times \{l\}.\]
But this implies that $a\preccurlyeq b$.

Now, suppose that (iii) holds. Then for every $\epsilon>0$, by combining arguments in the two directions, one has $(a-\epsilon)_+\preccurlyeq (b-\delta)_+\preccurlyeq b$ and thus $a\preccurlyeq b$. This establishes
(iii)$\Rightarrow$(i). It is left to show (i)$\Rightarrow$(iii).  Indeed, by the definition of $a\preccurlyeq b$ and the compactness and normality of the space, for every $i\in\{1,\dots, n\}$ and closed set $\overline{\operatorname{supp}((f_i-\epsilon)_+)}:=F_i\subset
\operatorname{supp}(f_i)$ there are a collection of open sets $\mathcal{U}_i=\{U^{(i)}_1,\dots, U^{(i)}_{J_i}\}$ forming a cover of $F_i$, $s^{(i)}_1,\dots, s^{(i)}_{J_i}\in G$ and  $k_1^{(i)},\dots, k_{J_i}^{(i)}\in
\{1,\dots, m\}$ such that
\[\bigsqcup_{i=1}^n\bigsqcup_{j=1}^{J_i}s^{(i)}_j\overline{U^{(i)}_j}\times \{k^{(i)}_j\}\subset \bigsqcup_{l=1}^m \operatorname{supp}(g_l)\times \{l\}.\]

Define $\mathcal{D}_{l}=\{s^{(i)}_j\overline{U^{(i)}_j}: j=1,\dots, J_i, i=1,\dots, n, k^{(i)}_j=l\}$ and write $K_l=\bigsqcup \mathcal{D}_{l}$, which is a closed subset of $\operatorname{supp}(g_l)$. Then there is a $\delta_l$ such that $K_l\subset \{x\in X: g_l(x)>\delta_l\}\subset \operatorname{supp}(g_l)$. Setting $\delta=\min\{\delta_l: 1\leq l\leq m\}$ we have $(a-\epsilon)_+\preccurlyeq (b-\delta)_+$.
\end{proof}

\begin{definition}
A state $D$ on the semigroup $W(X, G)$ is called \textit{lower semi-continuous} if $D([a])=\sup_{\epsilon>0}D([(a-\epsilon)_+])$ for all $a\in K(X, G)$.
\end{definition}

For every state $D\in S(W(X, G))$, define $\overline{D}([a])=\sup_{\epsilon>0}D([(a-\epsilon)_+])$, which is always a lower semi-continuous state on $W(X, G)$.

\begin{proposition}
For each state $D\in S(W(X, G))$, the induced function $\overline{D}$ is a lower semi-continuous state.
\end{proposition}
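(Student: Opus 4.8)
The plan is to verify, in this order, that $\overline{D}$ descends to a well-defined function on $W(X,G)=K(X,G)/\approx$, that this function is an order-preserving additive map into $[0,\infty]$ with $\overline{D}(0)=0$, and finally that it satisfies the lower semi-continuity identity of Definition 3.3. The single elementary fact underpinning everything is that $\epsilon\mapsto D([(a-\epsilon)_+])$ is non-increasing: if $0<\epsilon'\le\epsilon$ then $\operatorname{supp}((f-\epsilon)_+)\subset\operatorname{supp}((f-\epsilon')_+)$ for every $f\in C(X)_+$, so $(a-\epsilon)_+\preccurlyeq(a-\epsilon')_+$ and hence $D([(a-\epsilon)_+])\le D([(a-\epsilon')_+])$. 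Thus the supremum defining $\overline{D}([a])$ is the supremum of a net in $[0,\infty]$ that increases as $\epsilon\downarrow 0$; I would record this first and use it repeatedly.

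The central step I would isolate is the implication $a\preccurlyeq b\Rightarrow\overline{D}([a])\le\overline{D}([b])$. To prove it, fix $\epsilon>0$; Proposition 3.2, in the implication (i)$\Rightarrow$(iii), supplies $\delta>0$ with $(a-\epsilon)_+\preccurlyeq(b-\delta)_+$, whence $D([(a-\epsilon)_+])\le D([(b-\delta)_+])\le\overline{D}([b])$ since $D$ is order-preserving; taking the supremum over $\epsilon>0$ finishes it. Applying this statement to a pair $a\preccurlyeq b$, $b\preccurlyeq a$ shows that $\overline{D}$ is constant on $\approx$-classes, hence well defined on $W(X,G)$, and the statement itself is precisely order-preservation of $\overline{D}$. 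I expect this to be the only genuinely non-routine point: it is exactly where the approximation Proposition 3.2 must be used to compensate for the failure of $\preccurlyeq$ to be an algebraic order when $X$ is positive-dimensional.

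For additivity I would use $[a]+[b]=[(a,b)]$ together with the identity $((a,b)-\epsilon)_+=((a-\epsilon)_+,(b-\epsilon)_+)$, so that additivity of $D$ gives $D([((a,b)-\epsilon)_+])=D([(a-\epsilon)_+])+D([(b-\epsilon)_+])$; since both summands are monotone nets in $\epsilon$, the supremum over $\epsilon$ of their sum equals the sum of the suprema (for the nontrivial inequality one compares the $\epsilon_1$- and $\epsilon_2$-terms with the $\min\{\epsilon_1,\epsilon_2\}$-term), giving $\overline{D}([a]+[b])=\overline{D}([a])+\overline{D}([b])$. That $\overline{D}(0)=0$ is immediate, since the neutral element of $W(X,G)$ is the class of the zero function, $(0-\epsilon)_+=0$, and $D(0)=0$; together these show $\overline{D}$ is a state.

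Finally, for lower semi-continuity I would combine the definition of $\overline{D}$ with the identity $((a-\epsilon)_+-\delta)_+=(a-\epsilon-\delta)_+$ noted just before Proposition 3.2 to compute
\[
\sup_{\epsilon>0}\overline{D}([(a-\epsilon)_+])=\sup_{\epsilon>0}\sup_{\delta>0}D([(a-\epsilon-\delta)_+])=\sup_{\eta>0}D([(a-\eta)_+])=\overline{D}([a]),
\]
where the middle equality uses $\{\epsilon+\delta:\epsilon,\delta>0\}=(0,\infty)$. This is exactly the lower semi-continuity identity of Definition 3.3, so $\overline{D}$ is a lower semi-continuous state, as claimed.
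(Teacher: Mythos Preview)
Your proof is correct and follows essentially the same route as the paper: you use Proposition~3.2(i)$\Rightarrow$(iii) for monotonicity, the concatenation identity $((a,b)-\epsilon)_+=((a-\epsilon)_+,(b-\epsilon)_+)$ for additivity, and the reparametrization $\eta=\epsilon+\delta$ for lower semi-continuity. You are slightly more explicit than the paper in checking well-definedness on $\approx$-classes and the value $\overline{D}(0)=0$, and your treatment of additivity via monotone nets in $[0,\infty]$ avoids the paper's case split on finiteness, but these are cosmetic differences.
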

\begin{proof}
Let $a\preccurlyeq b$. Then by the proposition above, for all $\epsilon>0$ there is a $\delta>0$ such that $(a-\epsilon)_+\preccurlyeq (b-\delta)_+$. Thus, $\overline{D}([a])=\lim_{\epsilon\to 0}D([(a-\epsilon)_+])\leq \lim_{\delta\to 0}D([(b-\delta)_+])=\overline{D}([b])$. This shows that $\overline{D}$ is monotone.

Let $a, b\in K(X, G)$. If $\overline{D}([a])$ or $\overline{D}([b])$ is infinite then $\overline{D}([a]+[b])=\overline{D}([a])+\overline{D}([b])$ holds trivially since $\overline{D}$ is monotone. We then assume that both of them are finite. Then in this case one has
\begin{align*}
\overline{D}([a]+[b])&=\lim_{\epsilon\to 0}D([((a,b)-\epsilon)_+])=\lim_{\epsilon\to 0}D([((a-\epsilon)_+, (b-\epsilon)_+)])\\
&=\lim_{\epsilon\to 0}D([(a-\epsilon)_+])+\lim_{\epsilon\to 0}D([(b-\epsilon)_+])\\
&=\overline{D}([a])+\overline{D}([b]).
\end{align*}
This verifies that $\overline{D}$ is a state.

For lower semi-continuity, note that
\[\overline{D}([(a-\epsilon)_+])=\lim_{\delta\to 0}D([((a-\epsilon)_+-\delta)_+])=\lim_{\delta\to 0}D([(a-\epsilon-\delta)_+]).\] Thus we have
\[\lim_{\epsilon\to 0}\overline{D}([(a-\epsilon)_+])=\lim_{\epsilon\to 0}\lim_{\delta\to 0}D([(a-\epsilon-\delta)_+])=\overline{D}([a]).\]
\end{proof}

For every premeasure $\mu$ in $\operatorname{Pr}_G(X)$ define a state $D_\mu$ on $W(X, G)$ by $D_\mu([a])=\sum_{i=1}^n\mu(\operatorname{supp}(f_i))$ for $a=(f_1,\dots, f_n)\in K(X, G)$.

\begin{proposition}
$D_\mu$ defined above is a lower semi-continuous state on $W(X, G)$.
\end{proposition}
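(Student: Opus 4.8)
The plan is to isolate a single monotonicity statement and derive everything from it: whenever $a=(f_1,\dots,f_n)\preccurlyeq b=(g_1,\dots,g_m)$ in $K(X,G)$ one has $\sum_{i=1}^n\mu(\operatorname{supp}(f_i))\le\sum_{l=1}^m\mu(\operatorname{supp}(g_l))$. Granting this, $D_\mu$ takes equal values on $a$ and $b$ whenever $a\approx b$, hence descends to a well-defined function on $W(X,G)=K(X,G)/\!\approx$, and the same statement shows it is order preserving; that $D_\mu([(0)])=\mu(\emptyset)=0$ is immediate, as is additivity, since concatenation of $a$ and $b$ merely appends coordinates, so $D_\mu([a]+[b])=D_\mu([(a,b)])=D_\mu([a])+D_\mu([b])$ directly from the defining formula. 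Thus the only real content is the monotonicity inequality together with lower semicontinuity.

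For the inequality I would first record the elementary facts that a premeasure on the algebra $\mathcal{A}_0$ is finitely additive (take all but finitely many sets empty in axiom (ii)), hence monotone and finitely subadditive, and that finite unions, finite intersections and set differences of open sets again lie in $\mathcal{A}_0$. Fix $\eta>0$. Using inner regularity of $\mu$, pick for each $i$ a compact $F_i\subset\operatorname{supp}(f_i)$ with $\mu(F_i)>\mu(\operatorname{supp}(f_i))-\eta/n$ (when $\mu(\operatorname{supp}(f_i))=\infty$, choose instead $\mu(F_i)>M$ for arbitrary $M$ and run the same estimate to see the right-hand side is infinite). Apply the definition of $\bigsqcup_i\operatorname{supp}(f_i)\times\{i\}\prec\bigsqcup_l\operatorname{supp}(g_l)\times\{l\}$ to these $F_i$ to obtain covers $\mathcal{U}_i=\{U^{(i)}_1,\dots,U^{(i)}_{J_i}\}$ of $F_i$, elements $s^{(i)}_j\in G$ and indices $k^{(i)}_j$ with the sets $s^{(i)}_jU^{(i)}_j\times\{k^{(i)}_j\}$ pairwise disjoint inside $\bigsqcup_l\operatorname{supp}(g_l)\times\{l\}$. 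Then $G$-invariance and finite subadditivity give $\mu(F_i)\le\sum_{j}\mu(U^{(i)}_j)=\sum_j\mu(s^{(i)}_jU^{(i)}_j)$; summing over $i$ and reindexing by $l=k^{(i)}_j$, and noting that for each fixed $l$ the sets $\{s^{(i)}_jU^{(i)}_j:k^{(i)}_j=l\}$ are pairwise disjoint open subsets of $\operatorname{supp}(g_l)$ (this is precisely the disjointness and containment above read in the $l$-th coordinate), finite additivity and monotonicity give $\sum_{(i,j):k^{(i)}_j=l}\mu(s^{(i)}_jU^{(i)}_j)\le\mu(\operatorname{supp}(g_l))$. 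Combining these, $\sum_i\mu(F_i)\le\sum_l\mu(\operatorname{supp}(g_l))$, and letting $\eta\to0$ (resp.\ $M\to\infty$) finishes the inequality.

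For lower semicontinuity I would use that $\operatorname{supp}((f-\epsilon)_+)=\{x\in X:f(x)>\epsilon\}$, so $\{\operatorname{supp}((f-1/k)_+)\}_{k\ge1}$ is an increasing sequence of open sets with union $\operatorname{supp}(f)$. Writing $A_k=\{f>1/k\}$ one has $\operatorname{supp}(f)=A_1\sqcup\bigsqcup_{k\ge1}(A_{k+1}\setminus A_k)$, a countable disjoint union of members of $\mathcal{A}_0$ whose union again lies in $\mathcal{A}_0$, so axiom (ii) of the premeasure together with a telescoping computation yields $\mu(\operatorname{supp}(f))=\lim_k\mu(A_k)=\sup_{\epsilon>0}\mu(\{f>\epsilon\})$. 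Applying this in each of the finitely many coordinates of $a=(f_1,\dots,f_n)$ and interchanging the finite sum with the supremum (each summand being monotone in $\epsilon$) gives
\[
\sup_{\epsilon>0}D_\mu([(a-\epsilon)_+])=\sup_{\epsilon>0}\sum_{i=1}^n\mu(\{f_i>\epsilon\})=\sum_{i=1}^n\mu(\operatorname{supp}(f_i))=D_\mu([a]),
\]
which is exactly lower semicontinuity.

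All of this is routine modulo the premeasure axioms; the one place that needs care is the bookkeeping in the monotonicity inequality — translating the single disjointness statement for $\bigsqcup_{i,j}s^{(i)}_jU^{(i)}_j\times\{k^{(i)}_j\}$ into the coordinatewise statement used above, verifying that every set appearing belongs to $\mathcal{A}_0$ so that finite (sub)additivity, monotonicity and $G$-invariance of the premeasure genuinely apply, and handling the case where $\mu$ is not finite. I do not expect any serious obstacle beyond this bookkeeping.
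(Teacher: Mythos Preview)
Your proof is correct and follows essentially the same strategy as the paper. The monotonicity argument you spell out is exactly what the paper compresses into a single sentence (``since $\mu$ is $G$-invariant and inner-regular, we see that if $a\preccurlyeq b$ then $D_\mu([a])\le D_\mu([b])$''); your version simply makes the bookkeeping explicit. For lower semicontinuity there is a minor variation: you invoke the countable additivity axiom of the premeasure on the telescoping decomposition of $\operatorname{supp}(f)=\bigcup_k\{f>1/k\}$, whereas the paper uses inner regularity instead, observing that every compact $F_i\subset\operatorname{supp}(f_i)$ lies in some $\{f_i>\epsilon_i\}$, so that $\sum_i\mu(F_i)\le D_\mu([(a-\epsilon)_+])$ for suitable $\epsilon$, and then takes the supremum over the $F_i$. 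Both routes are one-liners given the standing hypotheses on $\mu\in\operatorname{Pr}_G(X)$, so there is no substantive difference.
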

\begin{proof}
The additivity of $D_\mu$ is clear from the definition of $D_\mu$ above. Since $\mu\in \operatorname{Pr}_G(X)$ is $G$-invariant and inner-regular, we see that if $a\preccurlyeq b$ then $D_\mu([a])\leq D_\mu([b])$. Now, let $a=(f_1,\dots, f_n)\in K(X, G)$. For every $1\leq i\leq n$ and a closed set $F_i\subset \operatorname{supp}(f_i)$ there is an $\epsilon_i$ such that $F_i\subset\operatorname{supp}((f_i-\epsilon_i)_+)=\{x\in X: f_i(x)>\epsilon_i\}\subset \operatorname{supp}(f_i)$. Now let $\epsilon=\max\{\epsilon_i: 1\leq i\leq n\}$ and thus $\sum_{i=1}^n \mu(F_i)\leq D_\mu([(a-\epsilon)_+])\leq D_\mu([a])$, which implies that $\sup_{\epsilon> 0}D_\mu([(a-\epsilon)_+])=D([a])$ because $\mu$ is inner-regular for every $\operatorname{supp}(f_i)$.
\end{proof}

We will show in Lemma 3.6 that the converse of  Proposition 3.5 is also true, that is, every lower semi-continuous state $D$ on $W(X, G)$ is of the form $D_\mu$ for a premeasure $\mu\in \operatorname{Pr}_G(X)$. The proof of this fact has a classical flavor. It is routine but quite long. In the Cuntz semigroup setting, Blackadar and Handelman provided a version concerning bounded dimension functions, which are bounded states of the Cuntz semigroup (see \cite[Proposition I.2.1]{B-H}). However, they omitted the proof. In addition, R{\o}rdam and Sierakowski proved the result for the type semigroup (see \cite[Lemma 5.1]{R-S}) in the zero-dimensional setting. Their proof relies on the zero-dimensionality of the space and cannot be generalized to higher dimensional cases.  Therefore, for the convenience of the readers, we present the proof here.  We denote by $\operatorname{Lsc}(W(X, G))$ the set of all lower semi-continuous states on $W(X, G)$.

\begin{lemma}
Every lower semi-continuous state $D\in \operatorname{Lsc}(W(X, G))$ induces a $G$-invariant premeasure $\mu_D\in \operatorname{Pr}_G(X)$.
\end{lemma}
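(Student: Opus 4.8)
The plan is to build the premeasure $\mu_D$ on $\mathcal{A}_0$ by first defining it on open sets and then extending. For an open set $U\subseteq X$ I set $\mu_D(U)\defeq D([\chi])$ where $\chi$ is any $f\in C(X)_+$ with $\operatorname{supp}(f)=U$; the first step is to check this is well-defined, i.e. independent of the choice of $f$. This is immediate from Definition 2.1: if $\operatorname{supp}(f_1)=\operatorname{supp}(f_2)=U$ then $(f_1)\preccurlyeq(f_2)$ and $(f_2)\preccurlyeq(f_1)$ (the covers $\mathcal U=\{U\}$, $s=e$ work in both directions), so $[f_1]=[f_2]$ in $W(X,G)$. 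Such an $f$ exists since $X$ is metrizable (take $f=\operatorname{dist}(\cdot,X\setminus U)$ for a compatible metric, or use that $X$ is normal and $U$ is open $F_\sigma$). Monotonicity ($U\subseteq V\Rightarrow \mu_D(U)\le\mu_D(V)$) and $G$-invariance ($\mu_D(sU)=\mu_D(U)$) are again direct from Definition 2.1, since $\operatorname{supp}(\alpha_s f)=s\operatorname{supp}(f)$ and the cover $\{U\}$ with the single group element $s$ witnesses $(\alpha_sf)\approx(f)$.

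The core of the argument is additivity on open sets: if $U_1,\dots,U_k$ are pairwise disjoint open sets with union $U$, then $\mu_D(U)=\sum_{i=1}^k\mu_D(U_i)$. For "$\ge$": pick $f_i$ with $\operatorname{supp}(f_i)=U_i$; then $f\defeq\sum f_i$ has $\operatorname{supp}(f)=U$, and the disjointness of the $U_i$ gives $(f_1,\dots,f_k)\approx (f)$ in $W(X,G)$ — indeed $\bigsqcup\operatorname{supp}(f_i)\times\{i\}\prec\operatorname{supp}(f)\times\{1\}$ via the trivial covers and group element $e$, using disjointness to land in a single copy — so $D([f])=\sum D([f_i])=\sum\mu_D(U_i)$. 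The reverse inequality "$\le$" is where the real work (and the main obstacle) lies: one must approximate from inside. Given a closed $F\subset U$, lower semicontinuity of $D$ and Proposition 3.2(i)$\Leftrightarrow$(iii) let me pass to $(f-\epsilon)_+$; then $\overline{\operatorname{supp}((f-\epsilon)_+)}$ is a compact subset of $U$, which one can cover by finitely many small open sets each contained in some $U_i$, and then regroup by index $i$. The point is to show $[(f-\epsilon)_+]\le\sum_i[g_i]$ where $g_i$ is supported in $U_i$, whence $D([(f-\epsilon)_+])\le\sum\mu_D(U_i)$, and sup over $\epsilon$ finishes it. The subtlety is purely bookkeeping with partitions of unity subordinate to the cover, exactly in the style of Proposition 2.3, and the metrizability/compactness is used to make the cover finite.

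Having additivity on a finite disjoint union of open sets, I extend $\mu_D$ to all of $\mathcal{A}_0$: every element of $\mathcal{A}_0$ is a finite disjoint union of "locally closed" sets of the form $O\cap F$ with $O$ open, $F$ closed (equivalently a finite union of differences of open sets), so I define $\mu_D$ on such a set by inclusion–exclusion / as an infimum over open neighborhoods, and check consistency via the additivity just proved together with monotonicity. Then I verify regularity: outer regularity holds essentially by construction, and inner regularity follows from lower semicontinuity of $D$ (which is what "$=\sup$ over $(a-\epsilon)_+$" encodes at the level of open sets). Finally, the premeasure axioms: $\mu_D(\emptyset)=D([0])=0$, and countable additivity for disjoint sequences in $\mathcal{A}_0$ whose union is again in $\mathcal{A}_0$ follows from a compactness argument — such a union, being in the algebra generated by open sets of a compact space, forces all but finitely many terms to be "small" after an inner-regular approximation, reducing to finite additivity. $G$-invariance of $\mu_D$ on $\mathcal{A}_0$ is inherited from $G$-invariance on open sets. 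I expect the inside-approximation step for additivity on open sets (the "$\le$" direction, via partitions of unity and lower semicontinuity) to be the main obstacle; everything else is the standard Carathéodory-style bookkeeping that the paper itself flags as "routine but quite long."
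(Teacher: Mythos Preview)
Your overall strategy matches the paper's: define $\mu_D$ on open sets via $D$, extend to closed sets and then to all of $\mathcal{A}_0$, and verify the premeasure axioms and regularity. However, you have misidentified the main obstacle. The ``$\le$'' direction of finite additivity on pairwise disjoint opens is just as trivial as the ``$\ge$'' direction: to see $(f)\preccurlyeq(f_1,\dots,f_k)$, cover any closed $F\subset U=\bigsqcup_i U_i$ by the very sets $U_1,\dots,U_k$ with group element $e$ and index $k_i=i$; the images $e\cdot U_i\times\{i\}$ are disjoint simply because the second coordinates differ. No partitions of unity or lower semicontinuity are needed here, and indeed the paper treats finite additivity on disjoint opens as a one-line observation.

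The place where lower semicontinuity of $D$ is genuinely used is elsewhere: one must check that the \emph{inner-regular} extension (sup over closed subsets) agrees with the original definition on open sets, i.e.\ that $\mu_D(O)=\sup\{\mu_D(F):F\subset O\ \text{closed}\}$ when $O=\operatorname{supp}(f)$. This follows because $D([f])=\sup_{\epsilon>0}D([(f-\epsilon)_+])$ and $\overline{\operatorname{supp}((f-\epsilon)_+)}\subset O$. Without this, the two definitions could disagree and the extension would be ill-defined. Your proposal mentions inner regularity only in passing and does not flag this consistency check; the paper does it explicitly right after its Claim~1.

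Two smaller points. First, ``inclusion--exclusion'' is dangerous here because $\mu_D$ takes values in $[0,\infty]$; the paper instead defines $\mu_D$ on closed sets as an infimum over open supersets, then on $F_\sigma$ sets (hence all of $\mathcal{A}_0$, since $X$ is metrizable) as a supremum over closed subsets. Second, your compactness sketch for countable additivity is in the right spirit but too compressed: the $A_m$ need not be open, so a direct finite-subcover argument does not apply. The paper handles this via three claims establishing $\mu_D(A)=\sup_n\mu_D(F_n)$ for increasing closed $F_n$ with union $A$, and then bootstraps to countable additivity; some care with the $\epsilon/2^n$ bookkeeping is required.
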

\begin{proof}First for every open set $O$, define $\mu_D(O)=D([f])$ where $O=\operatorname{supp}(f)$ for some $f\in C(X)_+$. Then by the definition of state, $\mu_D$ is $G$-invariant on open sets. In addition, it is finitely subadditive on open sets, i.e., if $O_1,\dots O_n$ are open then
\[\mu_D(\bigcup_{i=1}^n O_i)\leq \sum_{i=1}^n \mu_D(O_i).\]
Moreover, if the $O_1,\dots O_n$ are pairwise disjoint then we have additivity:
\[\mu_D(\bigsqcup_{i=1}^n O_i)= \sum_{i=1}^n \mu_D(O_i).\]
Finally, $\mu_D$ is monotone for open sets, i.e., $O_1\subset O_2$ implies $\mu_D(O_1)\leq \mu_D(O_2)$.
For every closed set $F$, define $\mu_D(F)=\inf\{\mu_D(O): F\subset O, O\ \textrm{open}\}$. Since the space $X$ is normal, $\mu_D$ is additive with respect to disjoint closed sets $\{F_1,\dots, F_n\}$, i.e.,
\[\mu_D(\bigsqcup_{i=1}^n F_i)= \sum_{i=1}^n \mu_D(F_i).\]

\begin{customcla}{1}
Let $F$ be a closed set and  $\{F_n\}$ an increasing sequence such that $F=\bigcup_{n=1}^\infty F_n$. Then $\mu_D(F)=\sup_n\mu_D(F_n)$.
\end{customcla}
\begin{proof}
If one of $\mu_D(F_n)$ is infinite, then this equality holds trivially. Thus, we may assume each of $\mu_D(F_n)$ is finite.  Fix an $\epsilon>0$. By the definition of $\mu_D(F_n)$, for each $n$ there is an open set $O_n\supset F_n$ such that
\[|\mu_D(F_n)-\mu_D(O_n)|\leq \epsilon/2^n.\]
Then $F\subset \bigcup_{n=1}^\infty O_n$ and thus there is an $N>0$ such that $F\subset \bigcup_{n=1}^N O_n$. Note that
\[(\bigcup_{n=1}^NO_n)\setminus F_N\subset \bigcup_{n=1}^N(O_n\setminus F_n),\]
which implies that
\[\mu_D((\bigcup_{n=1}^NO_n)\setminus F_N)\leq \mu_D(\bigcup_{n=1}^N(O_n\setminus F_n))\leq\sum_{n=1}^N \mu_D(O_n\setminus F_n)\leq \epsilon.\]

Write $O=\bigcup_{n=1}^NO_n$ for simplicity. We have $(O\setminus F_N)\sqcup F_N=O$. Now for every open set $U\supset F_N$, one has $O\subset O\setminus F_N \cup U$, which entails that
\[\mu_D(U)\geq \mu_D(O)-\mu_D(O\setminus F_N).\]
Therefore, one has $\mu_D(F_N)\geq \mu_D(O)-\mu_D(O\setminus F_N)\geq \mu_D(O)-\epsilon$. As $F\subset O$, one has
\[\mu_D(F_N)\geq \mu_D(O)-\epsilon\geq \mu_D(F)-\epsilon,\]
which establishes the claim.
\end{proof}

Now, define $$\mu_D(A)=\sup\{\mu_D(K): K\subset A, K\ \textrm{closed}\}$$ for every $F_\sigma$ set $A$. We need to verify that this definition coincides with our original definition for open sets at the beginning of this
 proof. Indeed, let $O=\operatorname{supp}(f)$ for some continuous function $f$.  Since $D$ is lower semi-continuous, one has 
\[\mu_D(O)=D([f])=\sup_{\epsilon>0}D([(f-\epsilon)_+])= \sup_{\epsilon>0}\mu_D(\operatorname{supp}((f-\epsilon)_+)).\]
Then observe that
\[\operatorname{supp}((f-\epsilon)_+)\subset \overline{\operatorname{supp}((f-\epsilon)_+)}\subset \operatorname{supp}(f)=O,\] which implies that \[\mu_D(O)=\sup\{\mu_D(F): F\subset O, F\ \textrm{closed}\}\] as desired.  Therefore, $\mu_D$ is well-defined on $F_\sigma$ sets.  Observe that it is monotone for all $F_\sigma$ sets.

\begin{customcla}{2}
Let $A=\bigcup_{n=1}^\infty F_n$ for an increasing sequence of closed sets $\{F_n\}$. Then $\mu_D(A)=\sup_{n}\{\mu_D(F_n)\}$.
\end{customcla}
\begin{proof}
By definition it suffices to show $\mu_D(A)=\sup\{\mu_D(K): K\subset A, K\ \textrm{closed}\}\leq \sup_{n}\{\mu_D(F_n)\}$. The proof is similar to that of Claim 1. If one of $\mu_D(F_n)$ is infinite, then the equality above holds trivially. Thus, we may assume each $\mu_D(F_n)$ is finite. Fix an $\epsilon>0$ and a closed set $K\subset A$. By the definition of $\mu_D(F_n)$, for each $n$ there is an open set $O_n\supset F_n$ such that
\[|\mu_D(F_n)-\mu_D(O_n)|\leq \epsilon/2^n.\]
Then $K\subset A\subset \bigcup_{n=1}^\infty O_n$ and thus there is an $N>0$ such that $K\subset \bigcup_{n=1}^N O_n$. Then because $\{F_n\}$ is increasing, one has
\[K\setminus F_N\subset (\bigcup_{n=1}^NO_n)\setminus F_N\subset \bigcup_{n=1}^N(O_n\setminus F_n),\]
Note that $K\setminus F_N$ is also a $F_\sigma$ set. Then we have
\[\mu_D(K\setminus F_N)\leq \mu((\bigcup_{n=1}^NO_n)\setminus F_N)\leq \sum_{n=1}^N \mu_D(O_n\setminus F_n)\leq \epsilon\]
since $\mu_D$ is monotone on $F_\sigma$ sets. We write $K\setminus F_N=\bigcup_{n=1}^\infty P_n$ for an increasing sequence of closed sets $\{P_n\}$. Then $K=(K\setminus F_N)\sqcup (K\cap F_N)= \bigsqcup_{n=1}^\infty((K\cap F_N)\sqcup P_n)$. Then claim 1 entails that $\mu_D(K)=\sup_n\{\mu_D((K\cap F_N)\sqcup P_n)\}$. Now there is a $M>0$ such that
\[\mu_D((K\cap F_N))+\mu_D(P_M)=\mu_D((K\cap F_N)\sqcup P_M)\geq \mu_D(K)-\epsilon.\]
Thus, we have
\[\mu_D(F_N)\geq \mu_D((K\cap F_N))\geq \mu_D(K)-2\epsilon.\]
This establishes Claim 2.
\end{proof}

Now, consider the \textit{semialgebra} $\mathcal{S}=\{O\cap F: O\ \textrm{open}, F\ \textrm{closed}\}$ in the sense of \cite[p.~297]{Roy}. Since our $X$ is metrizable, every set $O\cap F\in \mathcal{S}$ is a $F_\sigma$ set. Recall that $\mathcal{A}_0$ denotes the algebra generated by open sets in $X$. Note that $\mathcal{A}_0$ equals $\{\bigcup_{i=1}^n C_i: C_i\in \mathcal{S}, n\in \mathbb{N}\}$ (see \cite{Roy}).  Then every member of $\mathcal{A}_0$ is an $F_\sigma$ set.  We restrict the definition of $\mu_D$ to $\mathcal{A}_0$.

\begin{customcla}{3}
If $A, A_1,\dots, A_m,\dots,\in \mathcal{A}_0$ with $A=\bigsqcup_{m=1}^\infty A_m$ then one has $\mu_D(A)=\sum_{m=1}^\infty\mu_D(A_m)$.
\end{customcla}
\begin{proof}
If there is one $A_m$ such that $\mu_D(A_m)=\infty$, the equality holds trivially. Therefore we may assume that each $\mu_D(A_m)$ is finite.  Since each $A_m$ is an $F_\sigma$ set, we can write $A_m= \bigcup_{n=1}^\infty F_{m,n}$ for an increasing sequence of closed sets $\{F_{m,n}: n\in \mathbb{N}\}$. Thus $A=\bigsqcup_{m=1}^\infty \bigcup_{n=1}^\infty F_{m,n}$. Fix an $\epsilon>0$. By Claim 2 for each $m\in \mathbb{N}$ we can choose $N_m$ big enough such that
\[|\mu_D(A_m)-\mu_D(F_{m, N_m})|\leq \epsilon/2^m .\]
In addition, we can make the sequence $\{N_m\}$ strictly increasing. Now Define $P_M=\bigsqcup_{m=1}^M \bigcup_{n=1}^{N_M} F_{m,n}=\bigsqcup_{m=1}^M F_{m, N_M}$ for $M>0$. Note that $\{P_M: M\in \mathbb{N}\}$ is an increasing sequence of closed sets such that $A=\bigcup_{M=1}^\infty P_M$. Then Claim 2 shows that $\mu_D(A)=\sup_M\{\mu_D(P_M)\}$.

Observe that $\mu_D(P_M)=\sum_{m=1}^M \mu_D(F_{m, N_M})\leq \sum_{m=1}^\infty \mu_D(A_m)$ for each $M$. This implies that $\mu_D(A)\leq \sum_{m=1}^\infty \mu_D(A_m)$. Now if $\mu_D(A)=\infty$ then equality holds trivially. So we consider the case that $\mu_D(A)<\infty$. In this case, for every $M>0$, one has
\[|\mu(P_M)-\sum_{m=1}^M \mu_D(A_m)|\leq \sum_{m=1}^M|\mu_D(F_{m, N_M})-\mu_D(A_m)|\leq \sum_{m=1}^M \epsilon/2^m\leq \epsilon.\]
This implies that
\[\infty>\mu_D(A)\geq \mu_D(P_M)\geq \sum_{m=1}^M \mu_D(A_m)-\epsilon,\]
and thus we have
\[\mu_D(A)\geq  \sum_{m=1}^\infty \mu_D(A_m)\]
since $\epsilon$ was arbitrary.

\end{proof}
Claim 3 shows that $\mu_D$ on $\mathcal{A}_0$ is indeed a premeasure.  In addition, it also shows that $\mu_D$ has subadditivity for countably many sets in $\mathcal{A}_0$, i.e., for $A_1,
\dots\in \mathcal{A}_0$,
\[\mu_D(\bigcup_{n=1}^\infty A_n)\leq \sum_{n=1}^\infty \mu_D(A_n).\]

The definition of $\mu_D$ implies that it is $G$-invariant and satisfies inner regularity for all sets in $\mathcal{A}_0$ and outer regularity for closed sets.  We verify the outer regularity for all sets in $\mathcal{A}_0$. Let $B\in \mathcal{A}_0$, which is a $F_\sigma$ set, say, $B=\bigcup^\infty_{n=1}F_n$ for a increasing sequence of closed sets $\{F_n\}$. If $\mu_D(B)=\infty$ then it satisfies the outer regularity trivially since $\mu_D$ is monotone on $F_\sigma$ sets. Now suppose that $\mu_D(B)<\infty$. Then Claim 2 shows that $\mu_D(B)=\sup_{n\in \mathbb{N}}\mu_D(F_n)<\infty$. Then since we have outer regularity for all closed sets, for $\epsilon>0$ and each $n\in \mathbb{N}$, there is an open set $O_n$ such that $F_n\subset O_n$ and
\[\mu_D(F_n)>\mu_D(O_n)-\epsilon/2^n\]
Then define $O=\bigcup_{n=1}^\infty O_n$. Then one has $B\subset O$ and
\[\mu(O\setminus B)\leq \mu_D(\bigcup_{n=1}^\infty(O_n\setminus F_n))\leq \sum^\infty_{n=1}\mu_D(O_n\setminus F_n)<\epsilon.\]
This shows that $\mu_D(B)=\inf\{\mu_D(O): B\subset O,\ O\ \textrm{open}\}$ and thus $\mu_D$ satisfies the outer regularity and thus belongs to $\operatorname{Pr}_G(X)$.
\end{proof}

Recall that the premeasure $\mu_D$ can be extended to a Borel measure on $X$. The extension is unique if $\mu_D$ is $\sigma$-finite on $\mathcal{A}_0$. This happens, in particular, in the case that $D$ is bounded. i.e., $D([1_X])<\infty$.

\begin{theorem}
The map $S: D\rightarrow \mu_D$ is an affine bijection from $\operatorname{Lsc}(W(X, G))$ to $\operatorname{Pr}_G(X)$. In particular, there is an affine bijection between $\operatorname{Lsc}_1(W(X, G))$ and $M_G(X)$ where $\operatorname{Lsc}_1(W(X, G))$ is the set of all states $D$ in $\operatorname{Lsc}(W(X, G))$ with $D([1_X])=1$.
\end{theorem}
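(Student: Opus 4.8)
The plan is to verify, in this order, that $S$ is well defined, injective, affine, and surjective, and then to restrict to the bounded (probability) case. Well-definedness — that $S$ lands in $\operatorname{Pr}_G(X)$ — is precisely Lemma 3.6, so nothing new is needed there. The one structural fact I would isolate at the outset and use throughout is this: \emph{a premeasure $\nu\in\operatorname{Pr}_G(X)$ is completely determined by its restriction to open sets}. Indeed, outer regularity recovers $\nu$ on closed sets from its values on open sets, and then, using that $X$ is metrizable so that every member of $\mathcal{A}_0$ is an $F_\sigma$ set (as already observed in the proof of Lemma 3.6), inner regularity recovers $\nu$ on all of $\mathcal{A}_0$ from its values on closed sets. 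I would also record at the start that, again by metrizability, every open set $O$ has the form $\operatorname{supp}(f)$ for some $f\in C(X)_+$ (e.g. $f(x)=d(x,X\setminus O)$), that $\mu_D(O)$ is independent of the chosen representative since $\operatorname{supp}(f)=\operatorname{supp}(g)$ forces $f\approx g$, and that $\mu_D(O)=D([f])$.

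For injectivity I would observe that $D$ is recovered from $\mu_D$: for $a=(f_1,\dots,f_n)\in K(X,G)$, additivity of the state gives $D([a])=\sum_{i=1}^n D([f_i])=\sum_{i=1}^n\mu_D(\operatorname{supp}(f_i))$, so $\mu_{D_1}=\mu_{D_2}$ forces $D_1=D_2$. For affinity, I would first check that a convex combination $tD_1+(1-t)D_2$ of lower semi-continuous states is again one — finite positive linear combinations commute with the supremum as $\epsilon\to 0$ because $\epsilon\mapsto D_i([(a-\epsilon)_+])$ is monotone — then note that $\mu_{tD_1+(1-t)D_2}$ and $t\mu_{D_1}+(1-t)\mu_{D_2}$ agree on open sets directly from $\mu_D(O)=D([f])$, and finally invoke the structural fact to conclude they agree on all of $\mathcal{A}_0$. (Convex combinations of regular $G$-invariant premeasures are again such, which is a routine check.)

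Surjectivity is where Proposition 3.5 does the work: given $\mu\in\operatorname{Pr}_G(X)$, the state $D_\mu([a])=\sum_{i=1}^n\mu(\operatorname{supp}(f_i))$ is lower semi-continuous by Proposition 3.5, and for open $O=\operatorname{supp}(f)$ one has $\mu_{D_\mu}(O)=D_\mu([f])=\mu(\operatorname{supp}(f))=\mu(O)$; hence $\mu_{D_\mu}$ and $\mu$ agree on open sets and so, by the structural fact, $\mu_{D_\mu}=\mu$. This exhibits $S$ as an affine bijection $\operatorname{Lsc}(W(X,G))\to\operatorname{Pr}_G(X)$. For the last assertion, since $D([1_X])=\mu_D(\operatorname{supp}(1_X))=\mu_D(X)$, the map $S$ restricts to an affine bijection of $\operatorname{Lsc}_1(W(X,G))$ onto the set of probability premeasures in $\operatorname{Pr}_G(X)$; composing with the Carath\'eodory extension — which for a finite (hence $\sigma$-finite) $G$-invariant regular premeasure is a unique, $G$-invariant, regular Borel probability measure, with restriction to $\mathcal{A}_0$ as a two-sided inverse and affinity again from uniqueness of extensions — yields the affine bijection $\operatorname{Lsc}_1(W(X,G))\cong M_G(X)$.

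I expect the only genuine obstacle to be a careful proof of the structural fact that a regular $G$-invariant premeasure is pinned down by its values on open sets (which is exactly where metrizability enters); once that is in hand, the rest is bookkeeping layered on top of Lemma 3.6 and Proposition 3.5. The secondary point requiring attention is matching the two affine structures — confirming that convex combinations of lower semi-continuous states remain lower semi-continuous and that convex combinations of regular premeasures remain regular — but both are straightforward.
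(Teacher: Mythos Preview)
Your proposal is correct and follows essentially the same route as the paper: well-definedness from Lemma~3.6, injectivity via $D([f])=\mu_D(\operatorname{supp}(f))$, surjectivity via $S(D_\mu)=\mu$ using Proposition~3.5, and the probability case via Carath\'eodory extension. The paper's proof is terser---it simply asserts affinity and the identity $S(D_\mu)=\mu$---whereas you make explicit the ``structural fact'' (regular premeasures are determined on open sets) that underlies both; this is a welcome clarification rather than a different argument.
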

\begin{proof}
By Lemma 3.6, $S: D\rightarrow \mu_D$ is well defined. It is not hard to see that  $S$ is affine.  We first show that $S$ is injective. If $\mu_{D_1}=\mu_{D_2}$ then for every $f\in C(X)_+$ one has $$D_1([f])=\mu_{D_1}(\operatorname{supp}(f))=\mu_{D_2}(\operatorname{supp}(f))=D_2([f]),$$
which shows that $D_1=D_2$. To see that $S$ is surjective, it suffices to observe that $S(D_\mu)=\mu$ for every $\mu\in \operatorname{Pr}_G(X)$.

Now if $D([1_X])=1$ then $\mu_D$ is a probability premeasure on $\mathcal{A}_0$ and extends uniquely to a probability Borel measure on $X$ by the remark above. This establishes the last conclusion.
\end{proof}

\begin{lemma}
Suppose that $\alpha: G\curvearrowright X$ is a continuous action of a countably infinite discrete group $G$ on a compact metrizable space $X$. Let $a=(f_1, \dots, f_k)$ and $b=(g_1,\dots, g_l)$ be elements in $K(X, G)$. For all $i\leq k$ and $j\leq l$, we write $A_i=\operatorname{supp}(f_i)$ and $B_j=\operatorname{supp}(g_j)$.  Consider the following statements.
\begin{enumerate}[label=(\roman*)]
	\item There is an $n\in \mathbb{N}^+$ such that $(n+1)[a]\leq n[b]$.
	
	\item there is an $m\in \mathbb{N}^+$ such that $[a]\leq m[b]$ and $D([a])<D([b])$ for every state $D\in S(W(X, G))$ with $D[b]=1$.
	
	\item $\bigcup_{i=1}^kA_i\subset G\cdot (\bigcup_{j=1}^l B_j)$ and $\sum_{i=1}^k\mu(A_i)<\sum_{j=1}^l\mu(B_j)$ for every $\mu\in \operatorname{Pr}_G(X)$ with  $\sum_{j=1}^l\mu(B_j)=1$.
\end{enumerate}
Then (i)$\Rightarrow$(ii)$\Rightarrow$(iii).
\end{lemma}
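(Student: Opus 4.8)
The plan is to handle the two implications separately: (i) $\Rightarrow$ (ii) is a direct application of Proposition 3.1, while (ii) $\Rightarrow$ (iii) is obtained by unfolding the two assertions in (ii) against, respectively, the states $D_\mu$ and the defining relation $\preccurlyeq$.

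For (i) $\Rightarrow$ (ii): we have already verified that $W(X, G)$ is an abelian ordered semigroup, so Proposition 3.1 applies with $x = [a]$ and $y = [b]$. Statement (i) of the lemma is precisely condition (i) of Proposition 3.1, and statement (ii) of the lemma is precisely condition (iii) of Proposition 3.1 (recall $S(W(X,G),[b]) = \{D \in S(W(X,G)) : D([b]) = 1\}$). Hence (i) $\Rightarrow$ (ii) follows at once.

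For (ii) $\Rightarrow$ (iii): First, the strict inequality of premeasures. Fix $\mu \in \operatorname{Pr}_G(X)$ with $\sum_{j=1}^l \mu(B_j) = 1$ and consider the lower semi-continuous state $D_\mu$ from Proposition 3.5. Since $D_\mu([b]) = \sum_{j=1}^l \mu(\operatorname{supp}(g_j)) = \sum_{j=1}^l \mu(B_j) = 1$, assumption (ii) gives $\sum_{i=1}^k \mu(A_i) = D_\mu([a]) < D_\mu([b]) = \sum_{j=1}^l \mu(B_j)$, as required (in particular $D_\mu([a])$ is automatically finite). Second, the inclusion $\bigcup_{i=1}^k A_i \subset G\cdot(\bigcup_{j=1}^l B_j)$. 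By (ii) we have $[a] \leq m[b]$, i.e. $a \preccurlyeq (b, b, \dots, b)$ ($m$ copies), whose associated list of open supports is $B_1, \dots, B_l, B_1, \dots, B_l, \dots$. Fix $i \in \{1,\dots,k\}$ and a point $x \in A_i = \operatorname{supp}(f_i)$; since $X$ is Hausdorff, $\{x\}$ is closed, so we may apply Definition 1.4 (via Definition 2.1) to the closed sets $F_i = \{x\}$ and $F_{i'} = \emptyset$ for $i' \neq i$. This produces an open set $U \ni x$ and a group element $s \in G$ such that $sU$ is contained in one of the supports appearing in the list for $(b,\dots,b)$, that is, $sU \subset B_{j'}$ for some $j' \in \{1,\dots,l\}$. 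Hence $sx \in B_{j'}$ and $x \in s^{-1}B_{j'} \subset G\cdot(\bigcup_{j=1}^l B_j)$. As $x$ and $i$ were arbitrary, the inclusion follows.

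I do not expect a genuine obstacle in this lemma: (i) $\Rightarrow$ (ii) is a citation, and (ii) $\Rightarrow$ (iii) is bookkeeping. The only points deserving care are (a) matching each block of the relation $a \preccurlyeq (b,\dots,b)$ with the correct $B_{j'}$ among $B_1,\dots,B_l$, using that the list of supports of the $m$-fold concatenation is contained in $\{B_1,\dots,B_l\}$; and (b) observing that the quantification over closed subsets in the definition of $\preccurlyeq$ may be specialized to singletons, which is what lets the inclusion drop out directly rather than through a regularity argument.
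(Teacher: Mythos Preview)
Your proof is correct and follows essentially the same approach as the paper: both invoke Proposition~3.1 for (i)$\Rightarrow$(ii), and for (ii)$\Rightarrow$(iii) both use the state $D_\mu$ for the strict inequality and read off the inclusion by taking a point $x\in A_i$ and extracting from $a\preccurlyeq(b,\dots,b)$ an open neighborhood $U\ni x$ with $sU\subset B_{j'}$. Your explicit choice $F_i=\{x\}$, $F_{i'}=\emptyset$ is exactly the mechanism the paper leaves implicit.
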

\begin{proof}
	In light of Proposition 3.1, it suffices to prove 	(ii)$\Rightarrow$(iii).
	
	(ii)$\Rightarrow$(iii).  Let $x\in A_i$ for some $i\leq k$. Now suppose $[a]\leq m[b]$ for some $m\in \mathbb{N}^+$. Then one has 
	\[(f_1,\dots, f_k)\preccurlyeq (g_1,\dots, g_l, g_1,\dots, g_l,\dots, g_1,\dots, g_l);\]
	where $ (g_1,\dots, g_l, g_1,\dots, g_l,\dots, g_1,\dots, g_l)$ is the concatenation of $(g_1,\dots, g_l)$ with itself for $m$ times. This implies that there is an $s\in G$ and an open set $O$ containing $x$ such that $sO\subset B_j$ for some $j\leq l$. This shows $\bigcup_{i=1}^kA_i\subset G\cdot (\bigcup_{j=1}^l B_j)$.  In addition,  for each $\mu\in \operatorname{Pr}_G(X)$ with $\sum_{j=1}^l\mu(B_j)=1$, one has 
	\[D_\mu([b])=\sum_{j=1}^l\mu(B_j)=1,\] where $D_\mu$ is the lower semi-continuous state induced by $\mu$. Thus, (ii) implies that $D_\mu([a])<D_\mu([b])$, which means
	 \[\sum_{i=1}^k\mu(A_i)<\sum_{j=1}^l\mu(B_j),\]
	 as desired.
	\end{proof}

Now we are able to prove the following result. 

\begin{theorem}
Suppose that $\alpha: G\curvearrowright X$ is a continuous action of a countably infinite discrete group $G$ on a compact metrizable space $X$.  Let $k, l\in \mathbb{N}$. Then the following are equivalent.
\begin{enumerate}[label=(\roman*)]
\item For any $a=(f_1, \dots, f_k)$ and $b=(g_1,\dots, g_l)\in K(X, G)$, if there is an $n\in \mathbb{N}^+$ such that $(n+1)[a]\leq n[b]$ then $[a]\leq [b]$.

\item For any $a=(f_1, \dots, f_k)$ and $b=(g_1,\dots, g_l)\in K(X, G)$ with  $A_i=\operatorname{supp}(f_i)$ and $B_j=\operatorname{supp}(g_j)$ for all $i\leq k$ and $j\leq l$, if $\bigcup_{i=1}^kA_i\subset G\cdot (\bigcup_{j=1}^l B_j)$ and $\sum_{i=1}^k\mu(A_i)<\sum_{j=1}^l\mu(B_j)$ for every $\mu\in \operatorname{Pr}_G(X)$ with  $\sum_{j=1}^l\mu(B_j)=1$, then $a\preccurlyeq b$.

\end{enumerate}
\end{theorem}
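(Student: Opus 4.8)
I would prove the two implications separately; the reverse implication is essentially a bookkeeping step and the forward one carries all the content. For (ii)$\Rightarrow$(i): given $a=(f_1,\dots,f_k)$ and $b=(g_1,\dots,g_l)$ with $(n+1)[a]\le n[b]$ for some $n\in\mathbb N^+$, Lemma 3.8 (its implication (i)$\Rightarrow$(iii)) tells us that the pair $(a,b)$ satisfies $\bigcup_iA_i\subset G\cdot(\bigcup_jB_j)$ together with the strict inequality $\sum_i\mu(A_i)<\sum_j\mu(B_j)$ for every $\mu\in\operatorname{Pr}_G(X)$ with $\sum_j\mu(B_j)=1$; hypothesis (ii) then yields $a\preccurlyeq b$, i.e. $[a]\le[b]$. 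The interesting direction is (i)$\Rightarrow$(ii), which I approach as follows.

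\textbf{Reduction in (i)$\Rightarrow$(ii).} Fix $a,b$ satisfying the hypotheses of (ii). By Proposition 3.2 it suffices to prove $(a-\epsilon)_+\preccurlyeq b$ for every $\epsilon>0$; fix $\epsilon>0$ and set $a_\epsilon=(a-\epsilon)_+$, $A_i^\epsilon=\operatorname{supp}((f_i-\epsilon)_+)$. Since $a_\epsilon$ has the same number $k$ of coordinates as $a$, hypothesis (i) applies to the pair $(a_\epsilon,b)$, so it is enough to exhibit some $n\in\mathbb N^+$ with $(n+1)[a_\epsilon]\le n[b]$: then (i) gives $[a_\epsilon]\le[b]$, and letting $\epsilon\to0$ with Proposition 3.2 finishes the proof. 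To produce such an $n$ I would invoke Proposition 3.1, its implication (iii)$\Rightarrow$(i): it suffices to check \emph{(a)} $[a_\epsilon]\le m[b]$ for some $m\in\mathbb N^+$, and \emph{(b)} $D([a_\epsilon])<D([b])=1$ for every state $D$ on $W(X,G)$ with $D([b])=1$.

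\textbf{Step (a).} Here metrizability and compactness enter. Every closed $F\subset A_i^\epsilon$ lies in the \emph{fixed} compact set $L_i:=\overline{A_i^\epsilon}\subset A_i\subset G\cdot(\bigcup_jB_j)$, so $L_i$ has a finite subcover by sets $hB_j$ ($h\in G$, $1\le j\le l$), and the number $p_i$ of cover members can be chosen independently of $F$. Put $m=\sum_ip_i$ and split the $ml$ coordinates of the $m$-fold concatenation of $b$ into $m$ length-$l$ blocks. Given a family $\{F_i\}$, cover $F_i$ by this fixed family $\{h^{(i)}_qB_{r^{(i)}_q}\}_q$, translate $h^{(i)}_qB_{r^{(i)}_q}$ by $(h^{(i)}_q)^{-1}$ onto $B_{r^{(i)}_q}=\operatorname{supp}(g_{r^{(i)}_q})$, and route distinct pairs $(i,q)$ into distinct blocks, landing in the coordinate of position $r^{(i)}_q$; the translated pieces are then pairwise disjoint inside the target disjoint union, giving $[a_\epsilon]\le m[b]$.

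\textbf{Step (b).} This is the delicate point: the measure hypothesis in (ii) only controls the \emph{lower semi-continuous} states (those $D_\mu$ with $\mu\in\operatorname{Pr}_G(X)$, by Theorem 3.7), whereas Proposition 3.1 ranges over \emph{all} states. The bridge is that for a state $D$ with $D([b])=1$ one has $D([a_\epsilon])=D([(a-\epsilon)_+])\le\sup_{\delta>0}D([(a-\delta)_+])=\overline D([a])$, where $\overline D$ is the lower semi-continuous state of Proposition 3.4, and $\overline D\le D$ pointwise gives $\overline D([b])\le1$. If $\overline D([b])=0$, write $\overline D=D_\nu$ (Theorem 3.7), so $\nu(B_j)=0$ for all $j$; since $G$ is countable and each $G\cdot B_j$ is open, hence in $\mathcal A_0$, $G$-invariance and countable subadditivity of the premeasure $\nu$ force $\nu(G\cdot B_j)=0$, whence $\nu(A_i)\le\nu(\bigcup_jG\cdot B_j)=0$ and $\overline D([a])=0$. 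If $0<\overline D([b])\le1$, then $E:=\overline D/\overline D([b])$ is again a lower semi-continuous state with $E([b])=1$, so Theorem 3.7 and the hypothesis of (ii) give $E([a])<1$, i.e. $\overline D([a])<\overline D([b])\le1$. In either case $D([a_\epsilon])\le\overline D([a])<1=D([b])$, which is (b); the case $\overline D([b])=\infty$ is impossible as $\overline D([b])\le D([b])=1$. Combining (a) and (b) with Proposition 3.1 gives the desired $n$.

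\textbf{Expected obstacle.} I expect Step (b) to be the crux, specifically the passage from arbitrary states back to lower semi-continuous ones via $\overline D$ and the handling of the degenerate case $\overline D([b])=0$ (which uses $G$-invariance and countable additivity, not just the measure inequality). A secondary point needing care is the \emph{uniformity} of $m$ in Step (a): it is crucial that all closed subsets of $A_i^\epsilon$ lie in the single compact set $\overline{A_i^\epsilon}$, which is precisely why one must pass to $(a-\epsilon)_+$ rather than argue with $a$ directly.
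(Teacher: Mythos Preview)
Your proposal is correct and follows essentially the same route as the paper: both directions use the same ingredients (Lemma~3.8 for (ii)$\Rightarrow$(i); Propositions~3.1, 3.2, 3.4 and Theorem~3.7 for (i)$\Rightarrow$(ii)), and your Step~(a)/(b) decomposition matches the paper's argument almost line by line. Your explicit treatment of the degenerate case $\overline{D}([b])=0$ is in fact slightly more careful than the paper, which tacitly passes directly to $\overline{D}([a])<\overline{D}([b])$ without isolating that case.
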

\begin{proof}
	
	(i)$\Rightarrow$(ii) Suppose (i) holds and $a=(f_1, \dots, f_k)$ and $b=(g_1,\dots, g_l)\in K(X, G)$ satisfy the assumptions in (ii). Our aim is to establish $a\preccurlyeq b$. We begin with  $\bigcup_{i=1}^kA_i\subset G\cdot (\bigcup_{j=1}^l B_j)$. Let $\epsilon>0$ and define $F_i=\overline{\operatorname{supp}((f_i-\epsilon)_+)}$ for each $i\leq k$. Define $F=\bigcup_{i=1}^kF_i$. Then one has
	\[F\subset \bigcup_{i=1}^kA_i\subset G\cdot (\bigcup_{j=1}^l B_j),\] which implies that there is a $d\in \mathbb{N}^+$ and $s_1,\dots, s_d\in G$ such that for any $i\leq k$, one has
	\[\operatorname{supp}((f_i-\epsilon)_+)\subset F_i\subset F\subset \bigcup_{p=1}^d\bigcup_{j=1}^ls_pB_j.\]
	This entails that $[(f_i-\epsilon)_+]\leq d[(g_1,\dots, g_l)]$ for each $i\leq k$ and thus one has
	\[[(a-\epsilon_+)]=[((f_1-\epsilon)_+,\dots, (f_k-\epsilon)_+)]\leq kd[(g_1,\dots, g_l)]=kd[b].\]
	Now set $m=kd$ and thus one has   $[(a-\epsilon)_+]\leq m[b]$.
	
	On the other hand, we  claim that for all $\mu\in \operatorname{Pr}_G(X)$ with $0<\sum_{j=1}^l\mu(B_j)\leq 1$ one still has $\sum_{i=1}^k\mu(A_i)<\sum_{j=1}^l\mu(B_j)$. Indeed, for such a premeasure $\mu$, define 
	\[\mu'(\cdot)=\mu(\cdot)/(\sum_{j=1}^l\mu(B_j)),\]
	which is a premeasure in $\operatorname{Pr}_G(X)$ with $\sum_{j=1}^l\mu'(B_j)=1$. Then one has
	\[\sum_{i=1}^k\mu'(A_i)<\sum_{j=1}^l\mu'(B_j)=1\] 
	by the second assumption in (ii). This shows that
	 \[\sum_{i=1}^k\mu(A_i)<\sum_{j=1}^l\mu(B_j)\]. 
	 
	 Then, this claim  implies that $D'([a])<D'([b])$ for all $D'\in \operatorname{Lsc}(W(X, G))$ with $0<D'([b])\leq 1$ by Theorem 3.7. Therefore, for any state $D\in S(W(X, G))$ with $D([b])=1$, since $\overline{D}$ is always lower semi-continuous by Proposition 3.4, we have 
	 \[D[(a-\epsilon)_+]\leq \overline{D}([a])<\overline{D}([b])\leq D([b])=1.\]
	 Then Proposition 3.1 implies that there is an $n\in \mathbb{N}^+$ such that $(n+1)[(a-\epsilon)_+]\leq n[b].$ Then (i) implies that $[(a-\epsilon)_+]\leq [b]$, i.e., $(a-\epsilon)_+\preccurlyeq b$. Since $\epsilon$ is arbitrary, one has $a\preccurlyeq b$ as desired.
	 
	 (ii)$\Rightarrow$(i). Let $a=(f_1, \dots, f_k)$ and $b=(g_1,\dots, g_l)\in K(X, G)$ such that  $(n+1)[a]\leq n[b]$ holds for some $n\in \mathbb{N}^+$.  Then (i)$\Rightarrow$(iii) in Lemma 3.8  shows that the assumption in (ii) holds, Since we have assumed that (ii) holds. one has $a\preccurlyeq b$, i.e., $[a]\leq [b]$. This establishes (i).
\end{proof}

We then have the following corollaries.

\begin{corollary}
Suppose that $\alpha: G\curvearrowright X$ is a continuous action of a countably infinite discrete group $G$ on a compact metrizable space $X$. Then the following are equivalent.
\begin{enumerate}[label=(\roman*)]
	\item $W(X, G)$ is almost unperforated.
	
	\item For any two sequences $(A_1,\dots, A_k)$ and $(B_1,\dots, B_l)$ of open sets in $X$, if $\bigcup_{i=1}^kA_i\subset G\cdot (\bigcup_{j=1}^l B_j)$ and $\sum_{i=1}^k\mu(A_i)<\sum_{j=1}^l\mu(B_j)$ for every $\mu\in \operatorname{Pr}_G(X)$ with  $\sum_{j=1}^l\mu(B_j)=1$, then 
	\[\bigsqcup_{i=1}^k A_i\times \{i\}\prec \bigsqcup_{j=1}^l B_j\times \{j\}\] holds.
\end{enumerate}
\end{corollary}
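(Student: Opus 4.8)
The plan is to read off Corollary 3.10 directly from Theorem 3.9, quantified over all lengths $k,l\in\mathbb N$, together with one elementary observation peculiar to the metrizable setting. First I would record that observation: since $X$ is compact metrizable, fix a compatible metric $d$; for a non-empty proper open set $U\subsetneq X$ put $f_U=d(\,\cdot\,,X\setminus U)$, which lies in $C(X)_+$ and satisfies $\operatorname{supp}(f_U)=U$ because $X\setminus U$ is closed, and put $f_X\equiv 1$, $f_\emptyset\equiv 0$. Thus every finite sequence $(A_1,\dots,A_k)$ of open subsets of $X$ equals $(\operatorname{supp}(f_1),\dots,\operatorname{supp}(f_k))$ for some $a=(f_1,\dots,f_k)\in K(X,G)$, and conversely every $a\in C(X)_+^{\oplus k}$ produces such a sequence of open sets by taking open supports.

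Next I would match up the two conditions through this dictionary. Because $W(X,G)=K(X,G)/{\approx}$ and $K(X,G)=\bigcup_{k\ge 1}C(X)_+^{\oplus k}$, almost unperforation of $W(X,G)$ — that is, $(n+1)x\le nx'\Rightarrow x\le x'$ for all $x,x'\in W(X,G)$ — says precisely that statement (i) of Theorem 3.9 holds for every pair $(k,l)$. Likewise, using the first paragraph and unwinding $a\preccurlyeq b$ via Definition 2.1 and Definition 1.4, condition (ii) of the corollary is exactly statement (ii) of Theorem 3.9 quantified over all $(k,l)$: given open sequences $(A_i),(B_j)$ one lifts them to $a,b\in K(X,G)$ with matching supports, so that the hypotheses $\bigcup_iA_i\subset G\cdot\bigcup_jB_j$ and $\sum_i\mu(A_i)<\sum_j\mu(B_j)$ are literally those of Theorem 3.9(ii) and the conclusion $\bigsqcup_iA_i\times\{i\}\prec\bigsqcup_jB_j\times\{j\}$ is literally $a\preccurlyeq b$; conversely any instance of Theorem 3.9(ii) is an instance of the corollary's (ii) with $A_i=\operatorname{supp}(f_i)$, $B_j=\operatorname{supp}(g_j)$.

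Granting these translations, the corollary follows at once: if (i) holds, then for each fixed $(k,l)$ Theorem 3.9 gives its statement (ii), hence (ii) of the corollary; if (ii) of the corollary holds, then for each fixed $(k,l)$ Theorem 3.9 gives its statement (i), i.e.\ almost unperforation of $W(X,G)$ (Lemma 3.8 is already absorbed into Theorem 3.9 and requires no separate invocation). I do not anticipate a genuine obstacle here — all of the substance lies in Theorem 3.9. The only points needing a line of care are the surjectivity of $f\mapsto\operatorname{supp}(f)$ onto the open subsets of $X$, which genuinely uses metrizability (equivalently, that every open set is a cozero set) and is exactly why this corollary, unlike the construction of $W(X,G)$ itself, is stated only for metrizable $X$; and the degenerate cases where some, or all, of the $A_i$ or $B_j$ are empty, which are harmlessly absorbed by allowing the zero function as an entry of $a$ or $b$.
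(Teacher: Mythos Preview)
Your proposal is correct and follows exactly the paper's approach: the paper's proof is the single sentence ``This is a direct application of Theorem 3.9 and Definition 2.1,'' and you have simply unpacked that application. Your explicit remark that metrizability is used to realize every open set as an open support (via $f_U=d(\,\cdot\,,X\setminus U)$) is a point the paper leaves implicit, but it is the right detail to supply.
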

\begin{proof}
This is a direct application of Theorem 3.9 and Definition 2.1.
\end{proof}

\begin{corollary}
	Suppose that $\alpha: G\curvearrowright X$ is a continuous action of a countably infinite discrete group $G$ on a compact metrizable space $X$. Then the following are equivalent. 
	\begin{enumerate}[label=(\roman*)]
		\item For any non-empty open sets $A, B$, if there is an $n\in \mathbb{N}^+$ such that  $(n+1)A\prec nB$ then $A\prec B$.
		
		\item The action $\alpha: G\curvearrowright X$ has (generalized) dynamical comparison in the sense of Definition 1.3.
	\end{enumerate}
\end{corollary}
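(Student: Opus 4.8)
The plan is to deduce this immediately from Theorem 3.9 by specializing to $k=l=1$, once we translate between single open sets and length-one tuples in $K(X,G)$. First I would record two elementary facts. Since $X$ is metrizable, every open set $O\subseteq X$ has the form $\operatorname{supp}(f)$ for some $f\in C(X)_+$: take $f\equiv 1$ if $O=X$, $f\equiv 0$ if $O=\emptyset$, and $f(x)=\operatorname{dist}(x,X\setminus O)$ otherwise. Conversely, by Definition 2.1 together with the one-coordinate case of Definition 1.4, for $f,g\in C(X)_+$ the relation $(f)\preccurlyeq(g)$ (equivalently $[(f)]\le[(g)]$ in $W(X,G)$) depends only on the open sets $A=\operatorname{supp}(f)$ and $B=\operatorname{supp}(g)$, and upon unwinding the definitions it coincides with the relation $A\prec B$ of Definition 1.1; in the same way $(n+1)[(f)]\le n[(g)]$ unwinds, through the ``$nO\prec mV$'' shorthand of Definition 1.4, to exactly $(n+1)A\prec nB$.

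With this dictionary, condition (i) of Theorem 3.9 in the case $k=l=1$ says: for all open sets $A,B\subseteq X$, if $(n+1)A\prec nB$ for some $n\in\mathbb{N}^+$ then $A\prec B$. I claim this is equivalent to condition (i) of the corollary. Restricting to non-empty $A,B$ gives one implication; for the other it is enough to handle the degenerate cases: if $A=\emptyset$ then $A\prec B$ holds vacuously, and if $B=\emptyset$ then $nB=\emptyset$, so $(n+1)A\prec\emptyset$ forces every closed subset of $A$ to be empty, hence $A=\emptyset$ (any point of $A$ would be a non-empty closed subset), and again $A\prec B$. Likewise, condition (ii) of Theorem 3.9 with $k=l=1$ says: whenever $A,B$ are open, $A\subseteq G\cdot B$, and $\mu(A)<\mu(B)$ for every $\mu\in\operatorname{Pr}_G(X)$ with $\mu(B)=1$, then $A\prec B$; and since $\mu(B)=1$ rewrites the inequality $\mu(A)<\mu(B)$ as $\mu(A)<1$, this is word for word the definition of (generalized) dynamical comparison, Definition 1.3 --- that is, condition (ii) of the corollary.

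Therefore both conditions of the corollary are precisely the $k=l=1$ instances of the two conditions of Theorem 3.9, and the asserted equivalence is immediate from that theorem. I do not expect a real obstacle; the only thing requiring a little care is the bookkeeping in the first paragraph, namely verifying that $\prec$ between a single pair of open sets agrees with the one-coordinate special case of Definition 1.4, and clearing away the empty-set corner cases so that the ``non-empty'' restriction in (i) costs nothing.
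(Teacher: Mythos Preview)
Your proposal is correct and follows essentially the same approach as the paper: the paper's proof is the single sentence that this is a direct application of Theorem 3.9 to a pair of non-empty open sets $A,B$ represented as $A=\operatorname{supp}(f)$, $B=\operatorname{supp}(g)$ for some $f,g\in C(X)_+$. You have simply spelled out the dictionary and the empty-set corner cases more carefully than the paper bothers to.
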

\begin{proof}
	This is also a direct application of Theorem 3.9 to any pair of nonempty open sets $A, B$ with two functions $f, g\in C(X)_+\subset K(X, G)$ such that $A=\operatorname{supp}(f)$ and $B=\operatorname{supp}(g)$.
	\end{proof}

\begin{corollary}
Suppose that $\alpha: G\curvearrowright X$ is a continuous action of a countably infinite discrete group $G$ on a compact metrizable space $X$. Suppose in addition that $G$ is amenable or $\alpha$ is minimal.   The following are equivalent.
\begin{enumerate}[label=(\roman*)]
 \item Whenever $A, B$ are open sets in $X$ such that $\mu(B)>0$ for all $\mu\in M_G(X)$, if there is an $n\in \mathbb{N}^+$ such that $(n+1)A\prec nB$, then $A\prec B$.

 \item The action $\alpha: G\curvearrowright X$ has dynamical comparison in the sense of Definition 1.2.
\end{enumerate}
\end{corollary}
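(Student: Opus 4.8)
The plan is to deduce Corollary~3.12 from Theorem~3.9 together with one elementary ``bridge'' relating the invariant Borel probability measures in $M_G(X)$ to the invariant premeasures in $\operatorname{Pr}_G(X)$ that appear in that theorem; only the forward implication (i)$\Rightarrow$(ii) will require the amenability-or-minimality hypothesis, while (ii)$\Rightarrow$(i) is direct.

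For (ii)$\Rightarrow$(i) I would argue directly. Given $A=\operatorname{supp}(f)$, $B=\operatorname{supp}(g)$ with $\mu(B)>0$ for all $\mu\in M_G(X)$ and $(n+1)A\prec nB$ for some $n\in\mathbb{N}^+$, the latter says exactly that $(n+1)[f]\le n[g]$ in $W(X,G)$; restricting each $\mu\in M_G(X)$ to $\mathcal{A}_0$ (a probability premeasure in $\operatorname{Pr}_G(X)$) and applying the order-preserving state $D_\mu$ of Proposition~3.5 gives $(n+1)\mu(A)\le n\mu(B)$, hence $\mu(A)\le\frac{n}{n+1}\mu(B)<\mu(B)$ since $\mu(B)>0$. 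Thus $\mu(A)<\mu(B)$ for all $\mu\in M_G(X)$, and Definition~1.2 yields $A\prec B$. This step uses neither amenability nor minimality.

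For (i)$\Rightarrow$(ii), let $V,O$ be open with $\mu(V)<\mu(O)$ for all $\mu\in M_G(X)$; I want $V\prec O$, and may assume $V\neq\emptyset$ and $O\neq\emptyset$ (the claim being trivial otherwise). First I would record that $\mu(O)>\mu(V)\ge 0$ forces $\mu(O)>0$ for every $\mu\in M_G(X)$, so $O$ is of the kind admitted by hypothesis~(i). The bridge to establish next is: $G\cdot O=X$, and consequently every $\mu\in\operatorname{Pr}_G(X)$ with $\mu(O)=1$ satisfies $\mu(V)<1$. If $\alpha$ is minimal this is immediate: $X\setminus G\cdot O$ is a closed invariant proper subset, hence empty, so $G\cdot O=X$; and if $\mu\in\operatorname{Pr}_G(X)$ has $\mu(O)=1$ then compactness lets finitely many translates of $O$ cover $X$, so $\mu$ is finite, $\sigma$-finite, and its normalized unique extension lies in $M_G(X)$, giving $\mu(V)<\mu(O)=1$ (and if $M_G(X)=\emptyset$ there is no such $\mu$ and the assertion is vacuous). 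If $G$ is amenable then $M_G(X)\neq\emptyset$, and were $X\setminus G\cdot O$ non-empty, amenability would produce $\nu\in M_G(X)$ supported on that closed invariant set, so $\nu(O)=0$ and the hypothesis would say $\nu(V)<0$, which is impossible; so again $G\cdot O=X$, and the rest follows as above. In particular $V\subset G\cdot O$, so $(V,O)$ meets the hypotheses of condition~(ii) of Theorem~3.9 for $k=l=1$.

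To finish, I would re-run the proof of the implication (i)$\Rightarrow$(ii) of Theorem~3.9 for the pair $(f,g)$ with $\operatorname{supp}(f)=V$, $\operatorname{supp}(g)=O$, noting that its one appeal to the almost-unperforation hypothesis is always to a pair whose second entry is $g$, so that the present, weaker hypothesis~(i)---which does apply because $O$ satisfies $\mu(O)>0$ for all $\mu\in M_G(X)$---suffices. In outline: for $\epsilon>0$ the set $\overline{\operatorname{supp}((f-\epsilon)_+)}$ is compact in $V\subset G\cdot O$, so finitely many translates of $O$ cover it and $[(f-\epsilon)_+]\le m[g]$ for some $m$; for any $D\in S(W(X,G))$ with $D([g])=1$ the induced lower semicontinuous state $\overline{D}$ (Proposition~3.4) equals $D_\mu$ for some $\mu\in\operatorname{Pr}_G(X)$ with $\mu(O)=\overline{D}([g])\le 1$, and either $\mu(O)=0$, forcing $D([(f-\epsilon)_+])\le m'\overline{D}([g])=0$ (after comparing $\overline{\operatorname{supp}((f-\epsilon)_+)}$ against $\operatorname{supp}((g-\delta)_+)$ for small $\delta$), or $\mu(O)>0$, whence $\mu$ is finite by compactness, its normalization lies in $M_G(X)$, and $D([(f-\epsilon)_+])\le\overline{D}([f])=\mu(V)<\mu(O)\le 1$; in every case $D([(f-\epsilon)_+])<1=D([g])$, so Proposition~3.1 furnishes an $n$ with $(n+1)[(f-\epsilon)_+]\le n[g]$, i.e.\ $(n+1)\operatorname{supp}((f-\epsilon)_+)\prec nO$, whence hypothesis~(i) gives $\operatorname{supp}((f-\epsilon)_+)\prec O$, i.e.\ $(f-\epsilon)_+\preccurlyeq g$; as $\epsilon>0$ was arbitrary, Proposition~3.2 yields $f\preccurlyeq g$, that is, $V\prec O$. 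I expect the main obstacle to be the amenable, non-minimal case of the bridge---showing that ``$\mu(V)<\mu(O)$ for all $\mu\in M_G(X)$'' already forces $G\cdot O=X$---which is precisely the point where the existence of invariant probability measures on closed invariant subsets, guaranteed by amenability, is indispensable; everything else is the routine observation that a $G$-invariant premeasure with $\mu(O)=1$ and $G\cdot O=X$ is automatically finite.
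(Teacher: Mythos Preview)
Your proposal is correct and follows essentially the same route as the paper. Both directions match: for (ii)$\Rightarrow$(i) you and the paper extract $(n+1)\mu(A)\le n\mu(B)$ from $(n+1)A\prec nB$ and invoke Definition~1.2; for (i)$\Rightarrow$(ii) you both first show $G\cdot O=X$ (trivially in the minimal case, via an invariant measure on $X\setminus G\cdot O$ in the amenable case), then observe that any $\mu\in\operatorname{Pr}_G(X)$ with $\mu(O)=1$ is finite and normalizes to an element of $M_G(X)$, and finally re-run the argument of Theorem~3.9(i)$\Rightarrow$(ii). Your treatment of the edge case $\overline{D}([g])=0$ (via $[(f-\epsilon)_+]\le m'[(g-\delta)_+]$ for suitable $\delta$) is in fact a bit more careful than the paper's, which tacitly assumes $\overline{D}([b])>0$ in the displayed chain of inequalities.
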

\begin{proof}
(i)$\Rightarrow$(ii). Let $A, B$ be open sets in $X$. Suppose that $\nu(A)<\nu(B)$ for every $\nu\in M_G(X)$. First this implies $\nu(B)>0$ for all $\nu\in M_G(X)$ and, in particular, $B$ is not empty. When $\alpha$ is minimal or $G$ is amenable, we claim that $X=G\cdot B$. In the case that $\alpha$ is minimal, one has $X=G\cdot B$ trivially. Suppose that $G$ is amenable and $X\neq G\cdot B$, there is a $G$-invariant probability measure $\lambda$ for the closed subsystem $C=X\setminus G\cdot B\neq \emptyset$ since $G$ is amenable. However $\lambda$ induces a probability measure $\lambda'$ on $X$ with $\lambda'(E)=\lambda(E\cap C)/\lambda(C)$ for every Borel set $E$. Observe that $\lambda'(B)=0$ and this is a contradiction. This establishes the claim.  Therefore,  one has $A\subset X=G\cdot B$. In addition, since $X$ is actually covered by finitely many translates of $B$, for every $\mu\in \operatorname{Pr}_G(X)$ with $\mu(B)=1$, one has $\mu(X)$ is finite.  Define $\nu'(\cdot)=\mu(\cdot)/\mu(X)$, which is a probability premeasure in $\operatorname{Pr}_G(X)$. Now extend $\nu'$ to obtain a probability measure in $M_G(X)$, which we still denote by $\nu'$.  Then since $\nu'(A)<\nu'(B)$ holds by assumption, one has $\mu(A)<\mu(B)=1$.  Since (i) is assumed to hold, the same proof of Theorem 3.9(i)$\Rightarrow$(ii) implies $A\prec B$. This establishes (ii).

(ii)$\Rightarrow$(i). Let $A, B$ be open sets in $X$ such that $\mu(B)>0$ for all $\mu\in M_G(X)$ and there is an $n\in \mathbb{N}^+$ such that $(n+1)A\prec nB$. Then one has
\[(n+1)\mu(A)\leq n\mu(B)\] for all $\mu\in M_G(X)$. This implies that $\mu(A)<\mu(B)$ for all $\mu\in M_G(X)$.
Now since $\alpha$ has dynamical comparison, we have $A\prec B$.

\end{proof}

\section{Acknowledgement}
The author should like to thank his supervisor David Kerr for communicating the topic of this paper to him as well as his helpful corrections and discussions. He also should like to thank Christopher Schafhauser for invaluable comments and suggestions which improved this paper a lot.  He would like to thank Jianchao Wu for communicating with him that, restricting to zero-dimensional spaces, the generalized type semigroup introduced in this paper is equivalent to the so-called dynamical Cuntz semigroup in ongoing work of Jianchao Wu and his collaborators. Finally, he should like to thank the anonymous referee whose comments and suggestions helped a lot to improve the paper.

\end{document}